\documentclass[11pt]{amsart}

\usepackage{amsmath,amssymb,mathtools}
\usepackage[T1]{fontenc}
\usepackage{lmodern,microtype}
\usepackage{booktabs,array}
\usepackage[hidelinks]{hyperref}
\allowdisplaybreaks[1]
\numberwithin{equation}{section}

\newtheorem{theorem}{Theorem}[section]
\newtheorem{proposition}[theorem]{Proposition}
\newtheorem{lemma}[theorem]{Lemma}
\newtheorem{corollary}[theorem]{Corollary}
\theoremstyle{remark}
\newtheorem{remark}[theorem]{Remark}

\newcommand{\Z}{\mathbb Z}
\newcommand{\R}{\mathbb R}
\newcommand{\C}{\mathbb C}
\newcommand{\E}{\mathcal E}
\newcommand{\HH}{\mathcal H}
\newcommand{\dd}{\,\mathrm d}

\newcommand{\Dzero}{\mathcal D_0}
\hypersetup{
 pdftitle={Sharp discrete Hardy constants in low dimensions},
 pdfauthor={Natanael Alpay},
 pdfkeywords={Discrete Hardy inequality, lattice Laplacian, sharp constant,
 reciprocal edge field, transition dimension}
}

\title[Sharp discrete Hardy constants in low dimensions]{Sharp discrete Hardy constants in low dimensions}
\author[N. Alpay]{Natanael Alpay}
\address{Department of Mathematics, University of California, Irvine, Irvine, CA 92697, USA}
\email{nalpay@uci.edu}
\subjclass[2020]{39A12, 26D15, 47B39}
\keywords{Discrete Hardy inequality, lattice Laplacian, sharp constant,
reciprocal edge field, transition dimension}
\date{}
\begin{document}

\begin{abstract}
For $d\ge3$, let $\lambda_d$ denote the sharp constant in the
nearest-neighbor Hardy inequality on $\mathbb Z^d$ with Euclidean
inverse-square weight for functions vanishing at the origin.
Recent work established that the continuous Hardy coefficient
\[
C_d=\frac{(d-2)^2}{4}
\]
is sharp in dimensions three and four, while it is not sharp
for $d\ge9$. We prove
$\lambda_d=C_d$ for the remaining cases $5\le d\le8$.
The proof uses reciprocal edge fields and pointwise estimates
for the associated vertex weights. The exterior estimates follow
from polynomial inequalities, and the weights near the origin
are treated by explicit estimates and local corrections. 
\end{abstract}

\maketitle

\section{Introduction}

This paper continues the study started in \cite{Alpay2026} of the sharp
nearest-neighbor Hardy inequality on $\Z^3$.
For $d\ge3$, let
$e_1,\ldots,e_d$ denote the standard basis of $\Z^d$.  For a finitely supported
function $u:\Z^d\to\C$, define
\begin{align}
 \E_d(u)
 &=
 \sum_{x\in\Z^d}\sum_{j=1}^d
 |u(x+e_j)-u(x)|^2,
 \label{eq:energy-d}\\
 \HH_d(u)
 &=
 \sum_{x\in\Z^d\setminus\{0\}}
 \frac{|u(x)|^2}{|x|^2}.
 \label{eq:mass-d}
\end{align}
Every undirected nearest-neighbor edge is counted once in
\eqref{eq:energy-d}, including the edges incident with the origin.  Set
\[
 \Dzero(\Z^d)
 =
 \{u\in C_c(\Z^d;\C):u(0)=0\},
\]
and define
\begin{equation}\label{eq:lambda-d}
 \lambda_d
 =
 \inf_{\substack{u\in\Dzero(\Z^d)\\u\ne0}}
 \frac{\E_d(u)}{\HH_d(u)},
 \qquad
 C_d=\frac{(d-2)^2}{4}.
\end{equation}
Thus $\lambda_d$ is the largest coefficient for which
\[
 \E_d(u)\ge\lambda_d\HH_d(u)
\]
holds for every $u\in\Dzero(\Z^d)$.  The number $C_d$ is the sharp
coefficient in the corresponding continuous Hardy inequality on $\R^d$.
A continuum approximation gives
\begin{equation}\label{eq:universal-upper}
 \lambda_d\le C_d,\qquad d\ge3,
\end{equation}
so the fixed-dimensional problem is to decide when the lattice retains the
continuous sharp coefficient.

The first exact low-dimensional value was found in
\cite{Alpay2026}, where we proved
\[
 \lambda_3=\frac14
\]
by means of an explicit reciprocal edge field, an edgewise completion of
squares, and a concavity argument for the associated vertex weight.
Independently, Lizama \cite{Lizama2026} proved $\lambda_3=\frac14$ and
\[
 \lambda_4=1,
\]
using shifted radial supersolutions, angular convexity, and a discrete
ground-state representation.  Moreover, the same work proved
\begin{equation}\label{eq:lizama-high-d}
 \lambda_d<C_d,\qquad d\ge9,
\end{equation}
and showed that its shifted radial-power family cannot establish $C_d$ from
dimension five onward.  
Therefore the transition transition
window for dimensions 
$
 5\le d\le8
$
is left open to verify if indeed $\lambda_d=C_d$ still holds.\\

For large dimensions, Gupta
\cite{Gupta2023} proved that $\lambda_d$ has order $d$, in contrast with the
quadratic growth of $C_d$.  Gupta \cite{Gupta2026} and, independently, Huang
and Ye \cite{HuangYe2026} subsequently proved
\[
 \lim_{d\to\infty}\frac{\lambda_d}{d}=2.
\]
Lizama \cite{LizamaAsymptotics2026} refined this to
\[
 \lambda_d
 =
 2d-4-\frac{20}{3d}+O(d^{-2}),
 \qquad d\to\infty.
\]
Thus the equality $\lambda_d=C_d$ is necessarily a low-dimensional
phenomenon, what remained open is whatever this really is the case for all dimensions $5\leq d\leq 8$. \\

The main result of this paper closes that gap.  We prove
\begin{equation}\label{eq:intro-main}
 \lambda_d=C_d=\frac{(d-2)^2}{4},
 \qquad 5\le d\le8.
\end{equation}
This resolves the remaining cases of the conjecture stated
in \cite[Section~6]{Lizama2026}.

In particular,
\[
 \lambda_5=\frac94,\qquad
 \lambda_6=4,\qquad
 \lambda_7=\frac{25}{4},\qquad
 \lambda_8=9.
\]
Combining \eqref{eq:intro-main} with the known values in dimensions three and
four and with \eqref{eq:lizama-high-d}, we obtain the exact relation,
\begin{equation}\label{eq:intro-transition}
 \lambda_d=C_d
 \quad\Longleftrightarrow\quad
 3\le d\le8,
 \qquad d\ge3.
\end{equation}
Thus dimension nine is the first dimension in which the continuum Hardy
coefficient ceases to be sharp for the nearest-neighbor lattice problem.\\

The proof develops the reciprocal-edge method of \cite{Alpay2026}.
For $d=5,6,7$, the main analytic ingredient is a quadratic rational
reciprocal field whose first-order behavior agrees with the formal
Hardy ground-state ratios. Pairing the neighbors $x+e_j$ and $x-e_j$
expresses the induced vertex weight as a sum of functions of $x_j^2$.
The concavity argument used in dimension three no longer persists
in the required form; instead we reduce the affine lower bound to
an explicit quadratic polynomial inequality. We prove this inequality
by completing a square, with the remaining terms nonnegative on the
required range. This yields the coefficient $C_d$ outside a finite
set for $d=5,6,7$. The remaining points are treated by explicit
estimates and local corrections.\\

Dimension eight is the last dimension in which the lattice retains
the continuum sharp constant and requires a different rational
reciprocal field. We prove that its vertex weight is greater than
$9/|x|^2$ for $|x|^2\ge13$. The exterior estimate reduces to a
quartic polynomial inequality. A quadratic lower bound for this
quartic reduces the proof to elementary polynomial estimates.
We then modify the field inside
\[
 S_{12}=\{x\in\mathbb Z^8:1\le |x|^2\le12\},
\]
leaving all crossing and exterior edges unchanged. Coordinate
permutations and sign changes partition this set into $26$ symmetry
classes. A positive integer-valued function, constant on each class,
determines the interior modification. Its values and all required
finite inequalities are given in Section~\ref{sec:d8}. Together with
the exterior estimate, this gives $\lambda_8\ge9$; the continuum
upper bound then yields $\lambda_8=9$.\\

The paper is organized as follows. Section~\ref{sec:main-results}
states the main theorem and introduces the quadratic reciprocal
field. Section~\ref{sec:edge-proof} presents the reciprocal edge
decompositions and the finite-gluing lemma.
Section~\ref{sec:exterior} proves the exterior estimate in dimensions
five through seven. Section~\ref{sec:local} treats the weights near
the origin in dimensions four through seven.
Section~\ref{sec:d8} gives the dimension-eight construction.
Section~\ref{sec:sharpness} proves the continuum upper bound, and
Section~\ref{sec:completion} completes the proof of the transition
theorem. Appendices~\ref{app:polynomials} and
\ref{app:d8-polynomials} give the polynomial identities used in the
two exterior estimates.

\section{Main results}\label{sec:main-results}

\begin{theorem}[Sharp constants]\label{thm:main}
For every $d\in\{5,6,7,8\}$,
\begin{equation}\label{eq:sharp-constants}
 \lambda_d=\frac{(d-2)^2}{4}.
\end{equation}
Equivalently, for every $u\in\Dzero(\Z^d)$,
\begin{equation}\label{eq:main-hardy}
 \E_d(u)\ge\frac{(d-2)^2}{4}\HH_d(u).
\end{equation}
For $d=5,6$, \eqref{eq:main-hardy} holds for every finitely supported
$u:\Z^d\to\C$, without imposing $u(0)=0$.  In every stated admissible class
the coefficient is sharp, and equality is not attained by a nonzero finitely
supported function.
\end{theorem}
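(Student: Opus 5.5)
The proof splits into the upper bound $\lambda_d\le C_d$, which is \eqref{eq:universal-upper} and will be proved by continuum approximation in Section~\ref{sec:sharpness}, and the lower bound \eqref{eq:main-hardy}. For the lower bound I will use a reciprocal edge field. To each directed nearest-neighbor edge $(x,y)$ I assign a positive number $t_{xy}$ with $t_{yx}=1/t_{xy}$. The elementary inequality
\[
|u(x)-u(y)|^2\ge (1-t_{xy})|u(x)|^2+(1-t_{yx})|u(y)|^2
\]
comes from $|a-b|^2\ge (1-t)|a|^2+(1-t^{-1})|b|^2$ for $t>0$, since the difference is $|\sqrt t\,a-b/\sqrt t|^2$. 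Summing over edges gives $\E_d(u)\ge\sum_x W(x)|u(x)|^2$, where
\[
W(x)=\sum_{y\sim x}(1-t_{xy}).
\]
It therefore suffices to build a field with $W(x)\ge C_d/|x|^2$ for every $x\ne0$. The origin imposes no condition when $u(0)=0$. When $d=5,6$ I additionally want $W(0)\ge0$, which gives the statement without the constraint $u(0)=0$.

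For $d=5,6,7$ I take the quadratic rational field of Section~\ref{sec:main-results}. Its first-order behavior is $t_{x,x\pm e_j}\approx\bigl(|x\pm e_j|/|x|\bigr)^{-(d-2)/2}$, the ratio of the ground state $|x|^{-(d-2)/2}$. Pairing $x+e_j$ with $x-e_j$ writes $W(x)$ as $\sum_j g(x_j^2,|x|^2)$. After clearing denominators, the bound $W(x)|x|^2\ge C_d$ becomes a polynomial inequality in $s=|x|^2$ and the variables $x_j^2$. I would first prove an affine lower bound $g(a,s)\ge\alpha(s)+\beta(s)a$ for integer $a\in[0,s]$. Summing then uses $\sum_j x_j^2=s$, and the whole problem reduces to one quadratic inequality in $s$. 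That inequality I would settle by completing a square and checking that the leftover terms are nonnegative once $s\ge s_0$. This is Section~\ref{sec:exterior}.

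The finitely many points with $|x|^2<s_0$ are handled by explicit evaluation in Section~\ref{sec:local}. Where the estimate fails, the finite-gluing lemma of Section~\ref{sec:edge-proof} lets me redefine $t$ on edges inside a ball while keeping all crossing edges unchanged. For $d=8$ I use a different rational field and prove $W(x)\ge9/|x|^2$ for $|x|^2\ge13$ through a quartic inequality, which I bound below by a quadratic. On $S_{12}$ I replace the field by $t_{xy}=\phi(y)/\phi(x)$ for a positive integer class function $\phi$ on the $26$ symmetry classes, with the exterior values kept. The requirement is then a finite list of rational inequalities, checked in Section~\ref{sec:d8}.

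Sharpness and non-attainment come next. If equality $\E_d(u)=C_d\HH_d(u)$ held for some $u\ne0$, every edge inequality above would be an equality, and every vertex slack $W(x)-C_d/|x|^2$ would multiply $|u(x)|^2$ to zero. Equality on each edge forces $u(y)=t_{xy}u(x)$, so $u$ propagates along edges without ever vanishing. Such a $u$ cannot be finitely supported and nonzero; if needed, I would use a vertex with strict slack to get a contradiction. The main obstacle is the $d=8$ interior step: finding integer values of $\phi$ for which all the local inequalities hold simultaneously, given that the field is nearly critical there. My plan is to start from $\phi\approx|x|^{-3}$ scaled up and rounded, and then adjust it class by class.
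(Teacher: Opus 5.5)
Your plan is the paper's proof. It uses the two-point identity and the edge decompositions (Lemma~\ref{lem:edge-square}, Propositions~\ref{prop:edge} and~\ref{prop:punctured}), the quadratic field with opposite-neighbor pairing and an affine minorant for $d=5,6,7$, and local corrections near the origin. For $d=8$ it uses the separate rational field with the $26$-class correction on $S_{12}$, and the upper bound comes from continuum approximation.

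The one step you argue differently is non-attainment. The paper derives it from strict pointwise slack: $\varepsilon_d(x)>0$ at every $x\ne0$, together with $\gamma_5,\gamma_6>0$ (Corollary~\ref{cor:improvement}). That is why every exterior and local inequality there is proved strictly. Your argument uses only the vanishing of the edge squares. Equality forces $u(y)=t_{xy}u(x)$ on every edge of $\Z^d$, or of $X_d=\Z^d\setminus\{0\}$ in the punctured decomposition, which is also connected. Hence a finitely supported $u$ vanishes identically. This is correct and needs only nonstrict pointwise bounds, plus $W(0)\ge0$ for $d=5,6$, so your fallback to "a vertex with strict slack" is unnecessary. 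What the paper's route adds is an explicit positive remainder $\sum_{x\ne0}\varepsilon_d(x)|u(x)|^2$.

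Two points of precision. First, after fixing $\alpha=f(0)$ and choosing $\beta$ so that $d\alpha+n\beta=C_d/n$, the remaining inequality is quadratic in the coordinate variable $x_j^2$, with coefficients polynomial in $|x|^2$; it is not quadratic in $|x|^2$. Second, for $d=7,8$ the origin edges must not carry the base factor. For $d=7$, $a_7(1,0)=15/7$ would destroy the first-shell bound. One therefore uses the punctured decomposition, where the origin edge contributes its full $|u(x)|^2$. Alternatively, in your full-lattice formulation you can take $t_{x0}$ small, which is allowed because $W(0)$ plays no role when $u(0)=0$.
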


Thus
\[
 \lambda_5=\frac94,\qquad
 \lambda_6=4,\qquad
 \lambda_7=\frac{25}{4},\qquad
 \lambda_8=9.
\]
For $d=5,6,7$, the proof uses the quadratic reciprocal field below.
Dimension eight uses a different rational reciprocal field and an
explicit local correction, given in Section~\ref{sec:d8}.

\begin{corollary}\label{cor:transition}
For every integer $d\ge3$,
\[
 \lambda_d=\frac{(d-2)^2}{4}=C_d
 \quad\Longleftrightarrow\quad
 3\le d\le8.
\]
In particular, dimension nine is the first dimension in which the continuous
Hardy coefficient fails to be sharp on $\Z^d$.
\end{corollary}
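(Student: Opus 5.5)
The corollary follows by combining the values already available; no new analysis is needed. For the direction ``$\Leftarrow$'' we split into cases. For $d=3$ we invoke the value $\lambda_3=\frac14=C_3$ established in \cite{Alpay2026}, and independently in \cite{Lizama2026}. For $d=4$ we invoke $\lambda_4=1=C_4$ from \cite{Lizama2026}. For $5\le d\le 8$ we apply Theorem~\ref{thm:main}, which gives $\lambda_d=(d-2)^2/4=C_d$ directly. The three sources use the same normalization: each undirected edge, including the edges at the origin, is counted once in $\E_d$, the weight is $|x|^{-2}$ on $\Z^d\setminus\{0\}$, and the admissible class is $\Dzero(\Z^d)$. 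Confirming this is the only point that needs care in this direction.

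For ``$\Rightarrow$'', let $d\ge3$ with $\lambda_d=C_d$. If $d\ge9$, then \eqref{eq:lizama-high-d} gives $\lambda_d<C_d$, a contradiction, so $3\le d\le8$. As a sanity check independent of \cite{Lizama2026} for large $d$, the asymptotics $\lambda_d\sim 2d$ from \cite{Gupta2026,HuangYe2026} already force $\lambda_d<C_d$ for all sufficiently large $d$. Only the finitely many intermediate dimensions $d\ge9$ genuinely require \eqref{eq:lizama-high-d}, so I would cite it for all $d\ge9$ at once.

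The final assertion, that $d=9$ is the first dimension where sharpness fails, follows immediately. Equality holds for $3\le d\le 8$, and $\lambda_9<C_9$ by \eqref{eq:lizama-high-d}. The upper bound \eqref{eq:universal-upper} is not needed here, since it only enters through Theorem~\ref{thm:main}, whose proof Section~\ref{sec:sharpness} supplies.

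I expect no real obstacle. The only step needing attention is the consistency of normalizations noted above, which I would check against the definitions in the cited works before concluding.
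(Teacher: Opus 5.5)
Your proposal is correct and matches the paper's proof: Theorem~\ref{thm:main} gives $5\le d\le 8$, the cited works \cite{Alpay2026,Lizama2026} give $d=3,4$, and the strict inequality $\lambda_d<C_d$ for $d\ge9$ from \cite{Lizama2026} gives the converse. Your added remarks on normalization and on the large-$d$ asymptotics are harmless sanity checks and are not needed for the argument.
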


\begin{proof}
The equality in dimensions $5\le d\le8$ is Theorem~\ref{thm:main}.
The values $\lambda_3=1/4$ and $\lambda_4=1$ are known from
\cite{Alpay2026,Lizama2026}, whereas \cite{Lizama2026} proves
$\lambda_d<C_d$ for every $d\ge9$.
\end{proof}

\begin{remark}\label{rem:d4}
The same quadratic reciprocal-edge construction also recovers $\lambda_4=1$.
This value was proved previously in \cite{Lizama2026} by a different method.
We include a remark about the idea of the proof for $d=4$ but omit the details as it was done previously. 
\end{remark}

\subsection{Main steps of the proof}

For $4\le d\le7$, write $x\sim y$ when $x$ and $y$ are nearest neighbors.
For an ordered neighboring pair, put
\[
 n=|x|^2,\qquad m=|y|^2,\qquad S=n+m,\qquad\delta=m-n,
\]
and set
\begin{equation}\label{eq:parameters}
 r=\frac{d-2}{4},\qquad b=\frac{(r+1)(r+2)}3.
\end{equation}
The construction below extends the reciprocal-edge method of \cite[Appendix~A]{Alpay2026}, where the ground-state motivation and the role of reciprocity are discussed. Here we introduce a quadratic rational field with the parameters specified above. The required pointwise lower bound for its associated vertex weight is proved in Section~\ref{sec:exterior}.\\

Define the base field by
\begin{equation}\label{eq:base-field}
 a^{0,d}_{xy}=a_d(n,m)
 =\frac{S^2+b\delta^2-rS\delta}
        {S^2+b\delta^2+rS\delta}.
\end{equation}
Its numerator and denominator are positive because
\begin{equation}\label{eq:positive-quadratic}
 S^2+b\delta^2\pm rS\delta
 =\left(S\pm\frac r2\delta\right)^2
  +\frac{r^2+12r+8}{12}\delta^2>0.
\end{equation}
On a lattice edge $S>0$, so the two terms on the right cannot vanish
simultaneously.  Interchanging $x$ and $y$ reverses $\delta$, and therefore
\[
 a^{0,d}_{yx}=(a^{0,d}_{xy})^{-1}.
\]

Define the corresponding base weight by
\begin{equation}\label{eq:base-weight}
 W_d^0(x)=\sum_{y\sim x}(1-a^{0,d}_{xy}).
\end{equation}
Individual summands may be negative.  After pairing opposite lattice
directions, however, their sum admits an affine lower bound.

\begin{proposition}[Exterior weight estimate]\label{prop:exterior}
Let $5\le d\le7$, and let
\begin{equation}\label{eq:thresholds}
 (L_5,L_6,L_7)=(2,3,4).
\end{equation}
Then
\begin{equation}\label{eq:exterior-weight}
 W_d^0(x)>\frac{C_d}{|x|^2}
 \qquad\text{whenever }|x|^2\ge L_d.
\end{equation}
\end{proposition}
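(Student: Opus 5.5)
Write $n=|x|^2$ and $t_j=x_j^2$, so that $\sum_j t_j=n$. The plan is to pair the two neighbors $x\pm e_j$ in each coordinate direction. For $y=x\pm e_j$ we have
\[
m=n\pm 2x_j+1,\qquad S=2n+1\pm2x_j,\qquad \delta=1\pm 2x_j .
\]
Hence the paired contribution
\[
\phi(n,x_j)=2-a_d(n,n+2x_j+1)-a_d(n,n-2x_j+1)
\]
is even in $x_j$, i.e.\ a rational function $F_d(n,t_j)$ of $n$ and $t_j$. Then
\[
W_d^0(x)=\sum_{j=1}^d F_d(n,t_j).
\]
The first step is to compute $F_d$ explicitly. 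Over the common denominator $D_+D_-$, the numerator should be a polynomial in $(n,t)$ of low degree. A Taylor expansion in $1/n$ gives $F_d(n,t)\approx \frac{c_0}{n}-\frac{c_1 t}{n^2}+\cdots$. The parameters $r,b$ in \eqref{eq:parameters} are chosen so that, after summing over $j$ and using $\sum t_j=n$, the leading term equals exactly $C_d/n$.

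The second step is to establish an affine lower bound in $t$. For fixed $n$, I aim to show
\[
F_d(n,t)\ \ge\ \alpha_d(n)+\beta_d(n)\,t,\qquad 0\le t\le n,
\]
where $\alpha_d,\beta_d$ are explicit rational functions. These should be chosen tangent or secant to $F_d(n,\cdot)$ so that
\[
d\,\alpha_d(n)+n\,\beta_d(n)>\frac{C_d}{n}.
\]
Summing over $j$ then gives \eqref{eq:exterior-weight}. Clearing the positive denominators $D_\pm$ (positive by \eqref{eq:positive-quadratic}), the inequality $F_d-\alpha_d-\beta_d t\ge0$ becomes a polynomial inequality $P_d(n,t)\ge0$ on $\{0\le t\le n,\ n\ge L_d\}$. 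The natural choice of $\beta_d$ is the secant slope between $t=0$ and $t=n$, or the one forced by matching the $1/n^2$ term. Since the paper describes the reduced inequality as quadratic, I expect $P_d$ to be quadratic in $t$ after factoring out the linear factors corresponding to contact points.

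The main obstacle is proving $P_d\ge0$ uniformly for $n\ge L_d$ in each of $d=5,6,7$. I would first try to write
\[
P_d=A(n)\bigl(t-\tau(n)\bigr)^2+R(n,t),
\]
with $A>0$ and with $R$ a polynomial whose coefficients, after the substitution $n=L_d+s$, are all nonnegative in $s\ge0$ and $t\in[0,n]$. If $R$ fails this near small $n$, I would split $t$ into the ranges $[0,n/2]$ and $[n/2,n]$ or treat boundary monomials separately. Any finitely many small values of $n$ not covered by the resulting estimate would be checked directly by enumerating the lattice points. There the possible tuples $(t_1,\dots,t_d)$ with $\sum t_j=n$ are few, and evaluating the exact rationals $W_d^0(x)$ verifies strict inequality. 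Strictness in general comes from the positive remainder term.
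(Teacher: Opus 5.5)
Your architecture matches the paper's. You pair $x\pm e_j$ to get $W_d^0(x)=\sum_j f_{d,n}(x_j^2)$, bound $f_{d,n}$ below by an affine function on $[0,n]$, sum using $\sum_j x_j^2=n$, and prove the resulting polynomial inequality by completing a square, with remainders that have positive coefficients in $n-L_d$. The gap is the one step that decides whether the proof works: which affine minorant to use. The value at $s=0$ forces the intercept to be at most $f_{d,n}(0)$. The paper takes $\alpha=f_{d,n}(0)$ together with the smallest slope the summation condition allows,
\[
 \beta=\frac1n\Bigl(\frac{C_d}{n}-d\,f_{d,n}(0)\Bigr),
\]
so that $d\alpha+n\beta=C_d/n$ holds with \emph{equality}. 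This choice makes the gap equal to $\frac{4rs}{n^2ED}(\mathsf As^2+\mathsf Bs+\mathsf G)$, that is, $s$ times a quadratic. Strictness comes from some $x_j^2$ being positive. Your requirement $d\alpha+n\beta>C_d/n$ is therefore unnecessary and only makes the polynomial inequality harder.

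None of the choices you name works.
\begin{itemize}
\item \emph{The chord from $t=0$ to $t=n$} fails for large $n$. There $\mathsf B\sim\frac{64}{3}(r+1)(3+r-r^2)n^4>0$ dominates and $D$ decreases in $s$. Hence $(f(s)-\alpha-\beta s)/s$ is increasing, and $f$ lies strictly below its chord on $(0,n)$. Concretely, for $d=6$, $n=100$ (the point $(5,5,5,5,0,0)$), one has $f(25)\approx1.47\cdot10^{-4}$, while the chord gives about $3.03\cdot10^{-4}$.
\item \emph{Matching the $t/n^2$ term}, i.e. $\beta=-4r(r+1)/n^2$, admits no valid intercept. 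The summation condition would force $\alpha\ge 2r/n$, but $f_{d,n}(0)=4r(2n+1)/E<2r/n$ because $p>1$.
\item \emph{The tangent at $0$} fails at the threshold. For $d=6$, $n=3$, $f$ is concave near $0$, and $f(1)=-2/77$ while the tangent value there is about $0.024$.
\end{itemize}
Once the correct minorant is fixed, your final step is exactly the paper's identity
$\mathsf As^2+\mathsf Bs+\mathsf G=\mathsf A(s-\tfrac n4-c_d)^2+\theta_d\eta_dQ_d(t)s+\theta_dT_d(t)$ with $t=n-L_d$. It holds for all real $s\ge0$ and $n\ge L_d$, so you need neither the splitting of $[0,n]$ nor the enumeration of small $n$.
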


The same calculation with $d=4$ gives
\[
 W_4^0(x)>\frac1{|x|^2},
 \qquad |x|^2\ge2.
\]
The dimension-four verification is given in
Remark~\ref{rem:d4-check}.\\

Fix $x\ne0$ and put $n=|x|^2$. To combine the contributions from
opposite neighbors, for $0\le s\le n.$ Define
\[
 f_{d,n}(s)
 =
 2-a_d(n,n+1+2\sqrt{s})
  -a_d(n,n+1-2\sqrt{s}).
\]
Since $|x\pm e_j|^2=n+1\pm2x_j$, the value $f_{d,n}(x_j^2)$
is precisely the combined contribution of $x+e_j$ and $x-e_j$
to the vertex weight. If $x_j<0$, the two arguments are simply
interchanged. Summing over coordinate directions gives
\[
 W_d^0(x)=\sum_{j=1}^d f_{d,n}(x_j^2).
\]
For $5\le d\le7$ and $n\ge L_d$, the key estimate is
\begin{equation}\label{eq:main-minorant}
 f_{d,n}(s)\ge f_{d,n}(0)+\frac{s}{n}
       \left(\frac{C_d}{n}-d f_{d,n}(0)\right),
 \qquad 0\le s\le n.
\end{equation}
Since $\sum_jx_j^2=n$, summing \eqref{eq:main-minorant} gives
$W_d^0(x)\ge C_d/n$.  The difference between the two sides of
\eqref{eq:main-minorant} is a nonnegative factor, positive for $s>0$, times a quadratic
polynomial in $s$. We prove its positivity by
completing a square. The coefficient of the remaining linear term
and the remaining constant term are polynomials in $n-L_d$ with
strictly positive coefficients. Section~\ref{sec:exterior} and
Appendix~\ref{app:polynomials} give the calculation.

In dimension eight, we replace \eqref{eq:base-field} by the rational
field \eqref{eq:d8-rational-field}. Proposition~\ref{prop:d8-exterior}
gives the required strict lower bound for $|x|^2\ge13$. The correction
on $S_{12}$ is given by the $26$ values in Table~\ref{tab:d8-local}.
The exterior estimate and the local inequalities together give
Proposition~\ref{prop:d8-lower}.

Sharpness follows from the continuum upper bound below.

\begin{proposition}[Continuum upper bound]\label{prop:upper}
For every integer $d\ge3$, there is a sequence of nonzero
$u_k\in\Dzero(\Z^d)$ such that
\[
 \frac{\E_d(u_k)}{\HH_d(u_k)}\longrightarrow C_d.
\]
In particular, $\lambda_d\le C_d$.
\end{proposition}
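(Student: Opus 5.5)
We will use the standard continuum test functions, truncated radial powers, and compare the lattice sums with the corresponding integrals. Fix $d\ge3$ and $\varepsilon>0$ small. Take a smooth cutoff and put
\[
 \phi_k(t)=\begin{cases}0,& t\le1,\\ \eta(t)\,t^{-(d-2)/2+\varepsilon},&1\le t\le R_k,\end{cases}
\]
or more concretely $\phi(t)=t^{-\alpha}$ with $\alpha=(d-2)/2$ on $1\le t\le R$, a linear interpolation from $0$ to $1$ on $[1,2]$, and a logarithmic cutoff $\log(R^2/t)/\log R$ on $[R,R^2]$. Then set $u(x)=\phi(|x|)$ on $\Z^d$, so $u(0)=0$ and $u$ is finitely supported. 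With the logarithmic choice the continuum quantities satisfy
\[
 \int_{\R^d}|\nabla\phi(|x|)|^2\dd x=\alpha^2\omega_{d-1}\log R+O(1),
 \qquad \int_{\R^d}\frac{\phi(|x|)^2}{|x|^2}\dd x=\omega_{d-1}\log R+O(1),
\]
since both integrands behave like $t^{-1}$ on the range $[2,R]$, and the contributions of $[1,2]$ and $[R,R^2]$ are $O(1)$. The quotient therefore tends to $\alpha^2=C_d$ as $R\to\infty$.

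The key step is to show that the lattice sums differ from these integrals by $O(1)$, uniformly in $R$, so that the logarithmic divergence dominates.

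For $\HH_d$, compare $\sum_x g(x)$ with $\int g$ for $g=\phi^2/|x|^2$. On each unit cube centered at a lattice point $x$ with $|x|\ge3$, the oscillation of $g$ is bounded by $\sup|\nabla g|\lesssim |x|^{-d-1}$. Summing these errors gives a convergent series, so the discrepancy is $O(1)$; the near region contributes $O(1)$ trivially.

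For $\E_d$, write $u(x+e_j)-u(x)=\partial_j\Phi(\xi)$ for some $\xi$ on the segment, where $\Phi(x)=\phi(|x|)$. Then
\[
 |u(x+e_j)-u(x)|^2=|\partial_j\Phi(x)|^2+O\bigl(|x|^{-2\alpha-3}\bigr)=|\partial_j\Phi(x)|^2+O\bigl(|x|^{-d-1}\bigr),
\]
using $|\nabla\Phi|\lesssim|x|^{-\alpha-1}$ and $|\nabla^2\Phi|\lesssim|x|^{-\alpha-2}$. These bounds hold uniformly in $R$ on the logarithmic region as well, because there the derivatives are even smaller, with an extra factor $1/\log R$. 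Summing over $j$ gives $|\nabla\Phi(x)|^2$ plus a summable error. Then $\sum_x|\nabla\Phi(x)|^2$ is compared with the integral as above.

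Care is needed at the kinks $t=2,R,R^2$ of the piecewise definition. To avoid this, smooth $\phi$ with fixed mollified transitions, or simply note that the kinks affect only $O(R^{d-1})$ edges. Each such edge carries an error of size $\lesssim R^{-2\alpha-2}=R^{-d}$, so the total is $O(R^{-1})$. The edges near the origin and the region $|x|\le3$ contribute $O(1)$.

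Combining these estimates,
\[
 \frac{\E_d(u_R)}{\HH_d(u_R)}=\frac{C_d\,\omega_{d-1}\log R+O(1)}{\omega_{d-1}\log R+O(1)}\to C_d,
\]
and taking $R=R_k\to\infty$ gives the sequence $u_k$. Since each $u_k$ is a nonzero element of $\Dzero(\Z^d)$, the definition of $\lambda_d$ gives $\lambda_d\le C_d$.

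The main obstacle is the uniform $O(1)$ control of the Riemann-sum error for the energy. The difficulty is that the critical exponent makes the main term only logarithmically divergent, so errors must be summable and $R$-independent. This is what the decay $|x|^{-d-1}$ of the second-order error, together with the explicit treatment of the cutoff regions, provides.
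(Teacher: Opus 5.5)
Your argument is correct, but it takes a different route from the paper. The paper fixes a smooth profile $\varphi_L(x)=|x|^{-\alpha}\phi(\log|x|/L)$ with $\phi\in C_c^\infty((0,1))$ and computes its continuum quotient exactly as $C_d+L^{-2}\int|\phi'|^2/\int|\phi|^2$. It then samples the \emph{dilated} function $x\mapsto\varphi_L(x/N)$. Since $\E_d$ and $\HH_d$ both scale like $N^{d-2}$, for fixed $L$ the lattice quotient converges to the continuum one. This uses only qualitative Riemann-sum convergence for a fixed $C_c^\infty$ function supported away from the origin, and a diagonal choice $N=N(L)$ finishes the proof.

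You instead sample at unit scale. The logarithmic growth of the main terms then absorbs an error that you control uniformly in $R$, through the $|x|^{-d-1}$ decay of the second-order Taylor error and bookkeeping at the kinks. The paper's route needs no quantitative estimates and no kink analysis. Yours produces explicit unit-scale test functions with a rate $C_d+O(1/\log R)$. The price is the uniform bounds $|\nabla\Phi|\lesssim|x|^{-\alpha-1}$ and $|\nabla^2\Phi|\lesssim|x|^{-\alpha-2}$, which do hold uniformly in $R$ for your cutoff.

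Two details need correcting; neither affects the conclusion.

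First, the logarithmic zone $[R,R^2]$ does not contribute $O(1)$. Write $g(s)=\eta(e^s)$ for your cutoff $\eta$. Then the mass is $\omega_{d-1}\int g^2\dd s$ and the energy is $\omega_{d-1}\int(\alpha^2g^2+g'^2)\dd s$, because the cross term integrates to zero. On $[\log R,2\log R]$ one has $\int g^2\dd s=\frac13\log R$. So the correct asymptotics are $\frac43\omega_{d-1}\log R+O(1)$ for the mass and $\frac43C_d\,\omega_{d-1}\log R+O(1)$ for the energy. The ratio still tends to $C_d$ because $\int g'^2\dd s=O(1)$.

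Second, the kink at $t=R^2$ is crossed by $O(R^{2(d-1)})$ edges, not $O(R^{d-1})$. There, however, $|\nabla\Phi|\lesssim R^{-d}/\log R$, so each such edge carries error $O(R^{-2d})$ and the total is $O(R^{-2})$, which is still negligible.
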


\begin{remark}[Sharp constants and optimal weights]
Our result concerns the best coefficient multiplying the fixed weight
$|x|^{-2}$.  We do not claim that the weights constructed below are critical
or pointwise optimal Hardy weights.
\end{remark}

\section{Reciprocal edge decompositions}\label{sec:edge-proof}

Let
\[
 \mathcal E_+^{(d)}=\{(x,x+e_j):x\in\Z^d,\ 1\le j\le d\}.
\]
Every undirected nearest-neighbor edge appears exactly once in this set.
Since $a_{yx}=a_{xy}^{-1}$, reversing an edge does not change the
squared term in the decomposition below.

\begin{lemma}[Two-point identity]\label{lem:edge-square}
For $a>0$ and $z,w\in\C$,
\begin{equation}\label{eq:two-point-square}
 |z-w|^2=(1-a)|z|^2+(1-a^{-1})|w|^2
       +\left|a^{1/2}z-a^{-1/2}w\right|^2.
\end{equation}
\end{lemma}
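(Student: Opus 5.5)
The identity is a direct algebraic expansion, so I will simply expand the last term on the right-hand side of \eqref{eq:two-point-square} and compare with $|z-w|^2$. The only input is that $a>0$: then $a^{1/2}$ and $a^{-1/2}$ are positive reals, equal to their own conjugates, and $a^{1/2}a^{-1/2}=1$.

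First I expand
\[
 \left|a^{1/2}z-a^{-1/2}w\right|^2
 = a|z|^2+a^{-1}|w|^2-2\,\mathrm{Re}\bigl(a^{1/2}z\,\overline{a^{-1/2}w}\bigr)
 = a|z|^2+a^{-1}|w|^2-2\,\mathrm{Re}(z\overline w).
\]
The second equality uses that $a^{\pm1/2}$ are real, so they pass through the conjugation and through the real part, and their product is $1$.

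Next I add $(1-a)|z|^2+(1-a^{-1})|w|^2$ to this. The terms $a|z|^2$ and $a^{-1}|w|^2$ cancel against the corresponding pieces of the first two summands. What remains is
\[
 |z|^2+|w|^2-2\,\mathrm{Re}(z\overline w),
\]
which is exactly $|z-w|^2$. This proves \eqref{eq:two-point-square}.

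There is no real obstacle here. The only point to check is that the cross term carries no stray factor of $a$, and positivity of $a$ (so the square roots are real and reciprocal) guarantees this. This is also why, in the application, reversing an edge leaves the squared term unchanged. Swapping $z\leftrightarrow w$ together with $a\leftrightarrow a^{-1}$ gives $|a^{-1/2}w-a^{1/2}z|^2$, which is the same quantity.
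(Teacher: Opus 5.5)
Your proof is correct and matches the paper's argument: expand $|a^{1/2}z-a^{-1/2}w|^2$ as $a|z|^2+a^{-1}|w|^2-2\operatorname{Re}(z\overline w)$, then add the two remaining terms to recover $|z-w|^2$. Your closing observation about invariance under $z\leftrightarrow w$, $a\leftrightarrow a^{-1}$ is the same reciprocity the paper notes when it introduces the forward edges $\mathcal E_+^{(d)}$.
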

\begin{proof}
The last square is
$a|z|^2+a^{-1}|w|^2-2\operatorname{Re}(z\overline w)$.
Adding the other two terms leaves
$|z|^2+|w|^2-2\operatorname{Re}(z\overline w)=|z-w|^2$.
\end{proof}

\begin{proposition}[Full-lattice decomposition]\label{prop:edge}
Let $a_{xy}>0$ be reciprocal on every nearest-neighbor edge of $\Z^d$,
and define
\[
 W(x)=2d-\sum_{y\sim x}a_{xy}.
\]
For every finitely supported $u:\Z^d\to\C$,
\begin{equation}\label{eq:edge}
 \E_d(u)-\sum_xW(x)|u(x)|^2
 =\sum_{(x,y)\in\mathcal E_+^{(d)}}
   \left|\sqrt{a_{xy}}u(x)-a_{xy}^{-1/2}u(y)\right|^2.
\end{equation}
\end{proposition}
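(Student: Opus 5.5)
The plan is to apply Lemma~\ref{lem:edge-square} edge by edge and then regroup the diagonal terms by vertex. Since $u$ is finitely supported, only finitely many edges of $\mathcal E_+^{(d)}$ have a nonzero contribution. All sums below are therefore finite, and rearranging them is legitimate.

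For each $(x,y)\in\mathcal E_+^{(d)}$, apply \eqref{eq:two-point-square} with $a=a_{xy}$, $z=u(x)$, $w=u(y)$. This gives
\[
 |u(y)-u(x)|^2=(1-a_{xy})|u(x)|^2+(1-a_{xy}^{-1})|u(y)|^2
 +\left|\sqrt{a_{xy}}u(x)-a_{xy}^{-1/2}u(y)\right|^2 .
\]
By reciprocity, $a_{xy}^{-1}=a_{yx}$, so the middle term equals $(1-a_{yx})|u(y)|^2$. The two diagonal terms then have the symmetric form
\[
 (1-a_{xy})|u(x)|^2+(1-a_{yx})|u(y)|^2 .
\]
Summing over $\mathcal E_+^{(d)}$, the left side becomes $\E_d(u)$, because each undirected edge appears exactly once in $\mathcal E_+^{(d)}$, exactly as in \eqref{eq:energy-d}.

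Next we regroup the diagonal contributions. Every ordered neighboring pair $(x,y)$ arises exactly once among the endpoints of edges in $\mathcal E_+^{(d)}$. If $y=x+e_j$, it comes from the edge $(x,x+e_j)$ as the first endpoint; if $y=x-e_j$, it comes from the edge $(x-e_j,x)$ as the second endpoint. Hence the total diagonal contribution is
\[
 \sum_x|u(x)|^2\sum_{y\sim x}(1-a_{xy})
 =\sum_x\Bigl(2d-\sum_{y\sim x}a_{xy}\Bigr)|u(x)|^2
 =\sum_x W(x)|u(x)|^2 ,
\]
where we used that each vertex has exactly $2d$ neighbors. Subtracting this from the edge sum yields \eqref{eq:edge}.

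The only point needing care is the bookkeeping in the regrouping step: each vertex must receive the term $(1-a_{xy})$ exactly once for each of its $2d$ neighbors. This holds precisely because reciprocity converts $(1-a_{xy}^{-1})$ into $(1-a_{yx})$. Finiteness of the support removes any convergence issue. The origin is treated like any other vertex, since no condition $u(0)=0$ is used.
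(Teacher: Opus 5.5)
Your proof is correct and follows the same route as the paper. You apply the two-point identity on each forward edge, use reciprocity $a_{xy}^{-1}=a_{yx}$ to assign the second diagonal term to the ordered pair $(y,x)$, and regroup by vertex, with finite support justifying the rearrangement. Your observation that each ordered neighboring pair arises exactly once is the paper's accounting of the edges $(v,v+e_j)$ and $(v-e_j,v)$ at a fixed vertex $v$.
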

\begin{proof}
Apply \eqref{eq:two-point-square} to each forward edge.
At a fixed vertex $v$, the edges $(v,v+e_j)$ contribute
$\sum_j(1-a_{v,v+e_j})|u(v)|^2$. The edges $(v-e_j,v)$ contribute
$\sum_j(1-a_{v,v-e_j})|u(v)|^2$, where reciprocity has been used
for the second endpoint. The total coefficient is therefore
\[
 \sum_{j=1}^d\bigl[(1-a_{v,v+e_j})+(1-a_{v,v-e_j})\bigr]=W(v).
\]
Only edges meeting the finite support of $u$ contribute, so all sums
are finite and the regrouping is legitimate. The remaining terms are
the edge squares in \eqref{eq:edge}.
\end{proof}

Set $X_d=\Z^d\setminus\{0\}$. We now give the corresponding identity
for functions that vanish at the origin.
\begin{proposition}[Dirichlet decomposition]\label{prop:punctured}
Assign positive reciprocal factors to edges with both endpoints in $X_d$.
Set
\begin{equation}\label{eq:punctured-weight}
 V(x)=2d-\sum_{\substack{y\sim x\\y\ne0}}a_{xy},\qquad x\in X_d.
\end{equation}
For every $u\in\Dzero(\Z^d)$,
\begin{equation}\label{eq:punctured}
 \E_d(u)-\sum_{x\ne0}V(x)|u(x)|^2
 =\sum_{\substack{(x,y)\in\mathcal E_+^{(d)}\\x,y\ne0}}
 \left|\sqrt{a_{xy}}u(x)-a_{xy}^{-1/2}u(y)\right|^2.
\end{equation}
\end{proposition}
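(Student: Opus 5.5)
The plan is to mirror the proof of Proposition~\ref{prop:edge}, handling separately the edges that meet the origin. Split the forward edge set $\mathcal E_+^{(d)}$ into two parts. The first part consists of the edges with both endpoints in $X_d$. The second consists of the $2d$ edges incident with $0$, namely $(0,e_j)$ and $(-e_j,0)$ for $1\le j\le d$. Accordingly,
\[
 \E_d(u)=\sum_{\substack{(x,y)\in\mathcal E_+^{(d)}\\x,y\ne0}}|u(x)-u(y)|^2
 +\sum_{j=1}^d\bigl(|u(e_j)-u(0)|^2+|u(0)-u(-e_j)|^2\bigr).
\]
Since $u(0)=0$, the second sum equals $\sum_{j}\bigl(|u(e_j)|^2+|u(-e_j)|^2\bigr)$. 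Equivalently, it is $\sum_{x\ne0}\#\{y\sim x:y=0\}\,|u(x)|^2$, because $0$ is a neighbor of $x$ exactly when $x=\pm e_j$, and then exactly once.

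Next, apply Lemma~\ref{lem:edge-square} with $a=a_{xy}$, $z=u(x)$, $w=u(y)$ to each edge in the first part. Fix $v\in X_d$ and collect the coefficient of $|u(v)|^2$. An edge $(v,y)$ with $y\ne0$ contributes $1-a_{vy}$. An edge $(y,v)$ with $y\ne0$ contributes $1-a_{yv}^{-1}=1-a_{vy}$ by reciprocity. So the first part contributes $\sum_{y\sim v,\,y\ne0}(1-a_{vy})$ to the coefficient, and the origin edges contribute $\#\{y\sim v:y=0\}$. The total is
\[
 \#\{y\sim v\}-\sum_{y\sim v,\,y\ne0}a_{vy}=2d-\sum_{y\sim v,\,y\ne 0}a_{vy}=V(v).
\]
The remaining terms are exactly the edge squares on the right of \eqref{eq:punctured}. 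Only finitely many edges meet the support of $u$, so the rearrangement is justified just as before.

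The only point needing care is the bookkeeping at the neighbors $\pm e_j$ of the origin. There the missing factor $a_{v0}$ must be replaced by the full contribution $1$ coming from the edge to $0$, and this uses $u(0)=0$ essentially. This accounting is what makes the constant $2d$, rather than the number of nonzero neighbors, appear in \eqref{eq:punctured-weight}. Nothing else is expected to be an obstacle.
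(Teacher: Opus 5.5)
Your proposal is correct and follows the same route as the paper: you apply the two-point identity only to edges with both endpoints in $X_d$, note that each origin edge contributes $|u(\pm e_j)|^2$ because $u(0)=0$, and regroup using reciprocity to obtain the diagonal coefficient $2d-\sum_{y\sim v,\,y\ne0}a_{vy}=V(v)$. Your version simply writes out the edge bookkeeping that the paper's short proof summarizes.
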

\begin{proof}
Apply the two-point identity only to edges with nonzero endpoints.
If $x\sim0$, the remaining origin edge contributes $|u(x)|^2$,
since $u(0)=0$. Thus its diagonal contribution is retained in $2d$
but no factor for that edge is subtracted in \eqref{eq:punctured-weight}.
Regrouping the other terms as in Proposition~\ref{prop:edge} proves
\eqref{eq:punctured}.
\end{proof}

\begin{remark}[No gradient compatibility is required]
If $a_{xy}=H(y)/H(x)$ for a positive vertex function $H$, the factors
multiply to one around every closed path. The two propositions require positivity and reciprocity, but not this
closed-path condition. We use this fact when modifying the field in
Section~\ref{sec:local}.
\end{remark}

\begin{lemma}[Finite gluing]\label{lem:gluing}
Let $S\subset X_d$ be finite, and let $a^0$ be a positive reciprocal
field on edges with nonzero endpoints. Suppose its punctured weight
satisfies $V^0(x)\ge\kappa/|x|^2$ for $x\in X_d\setminus S$.
For $x\in S$, define
\begin{equation}\label{eq:core-diagonal}
 D_x=2d-\frac{\kappa}{|x|^2}
     -\sum_{\substack{y\sim x\\y\in X_d\setminus S}}a^0_{xy},
\end{equation}
and
\begin{equation}\label{eq:core-operator}
 (Mh)(x)=D_xh(x)-\sum_{\substack{y\sim x\\y\in S}}h(y).
\end{equation}
If $h:S\to(0,\infty)$ satisfies $Mh\ge0$, there is a positive
reciprocal field with punctured weight at least $\kappa/|x|^2$
at every $x\ne0$.
\end{lemma}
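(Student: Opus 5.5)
The natural construction is a ground-state transform with respect to $h$ on edges inside $S$, keeping $a^0$ everywhere else. Define a new field $a$ on edges with both endpoints nonzero as follows:
\[
 a_{xy}=\frac{h(y)}{h(x)}\quad\text{if }x,y\in S,\qquad a_{xy}=a^0_{xy}\quad\text{otherwise}.
\]
On $S\times S$ edges the field is positive and reciprocal because $h>0$. Every other edge keeps its original factor $a^0_{xy}$, which is positive and reciprocal by hypothesis. This includes the crossing edges between $S$ and $X_d\setminus S$ and the exterior edges. No closed-path condition is needed, by the remark after Proposition~\ref{prop:punctured}, so mixing the two rules is legitimate.

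\textbf{Vertices outside $S$.} Take $x\in X_d\setminus S$. Every edge at $x$ has at least one endpoint outside $S$, so its factor is unchanged. Hence the new punctured weight \eqref{eq:punctured-weight} equals $V^0(x)$, which is at least $\kappa/|x|^2$ by hypothesis.

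\textbf{Vertices in $S$.} Take $x\in S$. Split its nonzero neighbors into those in $S$ and those in $X_d\setminus S$; the origin edge contributes no factor in either case. Then
\[
 V(x)=2d-\sum_{\substack{y\sim x\\ y\in X_d\setminus S}}a^0_{xy}-\sum_{\substack{y\sim x\\ y\in S}}\frac{h(y)}{h(x)}
 =\frac{\kappa}{|x|^2}+D_x-\frac{1}{h(x)}\sum_{\substack{y\sim x\\y\in S}}h(y)
 =\frac{\kappa}{|x|^2}+\frac{(Mh)(x)}{h(x)},
\]
using \eqref{eq:core-diagonal} and \eqref{eq:core-operator}. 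Since $Mh\ge0$ and $h>0$, this gives $V(x)\ge\kappa/|x|^2$.

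The main point needing care is the bookkeeping at the origin. Neighbors $y=0$ of $x\in S$ must be excluded both from the sum defining $D_x$ and from the $S$-sum in $M$. They are: $0\notin X_d$, so $0\notin X_d\setminus S$, and $0\notin S$ because $S\subset X_d$. This matches the convention of Proposition~\ref{prop:punctured}, so the field above satisfies the conclusion of the lemma.
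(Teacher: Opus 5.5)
Your proof is correct and follows essentially the same route as the paper: you replace the factors on edges with both endpoints in $S$ by $h(y)/h(x)$, keep $a^0$ on all crossing and exterior edges, and obtain $V(x)-\kappa/|x|^2=(Mh)(x)/h(x)\ge 0$ on $S$ while exterior weights are unchanged. Your explicit check that origin edges are excluded from both sums matches the paper's punctured-weight convention.
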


\begin{proof}
Set $a_{xy}=h(y)/h(x)$ when both endpoints lie in $S$, and retain
$a^0_{xy}$ on every other edge. In particular every edge crossing the
boundary of $S$ remains unchanged. Each exterior vertex therefore
retains all its incident factors. For $x\in S$, its new weight satisfies
\[
 V(x)-\frac{\kappa}{|x|^2}
 =D_x-\sum_{\substack{y\sim x\\y\in S}}\frac{h(y)}{h(x)}
 =\frac{(Mh)(x)}{h(x)}\ge0.
\]
The field is positive and reciprocal by construction.
\end{proof}

\section{Proof of the exterior weight estimate}\label{sec:exterior}

Expanding \eqref{eq:base-field} gives
\begin{equation}\label{eq:r-polynomial}
 a_d(n,m)=
 \frac{(r^2+6r+5)n^2-2(r^2+3r-1)nm+(r^2+5)m^2}
 {(r^2+5)n^2-2(r^2+3r-1)nm+(r^2+6r+5)m^2}.
\end{equation}
Table~\ref{tab:field} lists the coefficients for each dimension after
multiplication by positive common factors to clear denominators.

\begin{table}[ht]
\centering
\caption{Coefficients in $a_d(n,m)=(\alpha_dn^2+\beta_dnm+\gamma_dm^2)/(\gamma_dn^2+\beta_dnm+\alpha_dm^2)$ for $4\le d\le7$.}
\label{tab:field}
\begin{tabular}{crrr}
\toprule
$d$&$\alpha_d$&$\beta_d$&$\gamma_d$\\
\midrule
4&11&$-2$&7\\
5&161&$-58$&89\\
6&2&$-1$&1\\
7&75&$-46$&35\\
\bottomrule
\end{tabular}
\end{table}

Fix $n\ge1$. For $0\le s\le n$, define
\begin{equation}\label{eq:f-definition}
 f_{d,n}(s)
 =2-a_d(n,n+1+2\sqrt{s})-a_d(n,n+1-2\sqrt{s}).
\end{equation}
The rational formula for $a_d(n,m)$ is well defined for real
$n>0$ and $m\ge0$, since its denominator is positive by
\eqref{eq:positive-quadratic}. For $0\le s\le n$, we have
\[
 n+1-2\sqrt{s}\ge n+1-2\sqrt{n}
 =(\sqrt{n}-1)^2\ge0.
\]
Thus both evaluations of $a_d$ in \eqref{eq:f-definition}
are well defined.

At a lattice point $x\ne0$ with $n=|x|^2$, the identities
$|x\pm e_j|^2=n+1\pm2x_j$ show that $f_{d,n}(x_j^2)$
combines the contributions from the two opposite neighbors
$x+e_j$ and $x-e_j$. Consequently,
\begin{equation}\label{eq:W-paired}
 W_d^0(x)=\sum_{j=1}^d f_{d,n}(x_j^2).
\end{equation}
If $x_j<0$, the two neighboring squared radii are interchanged, which
does not change their sum in \eqref{eq:f-definition}.

Put
\begin{equation}\label{eq:pqE}
 p=r+2,\qquad q=\frac{(r+1)(r+5)}3,\qquad
 E=4n^2+2pn+q,
\end{equation}
and write
\begin{align}
 D&=(E+4qs)^2-16(pn+q)^2s,\label{eq:D}\\
 N&=(2n+1+4s)(E+4qs)-16(n+1)(pn+q)s.\label{eq:N}
\end{align}
To derive the paired expression, note that
\[
 1-a_d(n,m)=\frac{2r(n+m)(m-n)}
 {(n+m)^2+b(m-n)^2+r(n+m)(m-n)}.
\]
For $m=n+1\pm2\sqrt s$, the two denominators are
\[
 E+4qs\pm4(pn+q)\sqrt s.
\]
Both are positive by \eqref{eq:positive-quadratic}, so their product
$D$ is positive. The numerator factors $(n+m)(m-n)$ are
\[
 2n+1+4s\pm4(n+1)\sqrt s.
\]
Adding the two fractions now gives
\begin{equation}\label{eq:f-general}
 f_{d,n}(s)=\frac{4rN}{D},
 \qquad f_{d,n}(0)=\frac{4r(2n+1)}E.
\end{equation}

\begin{lemma}\label{lem:minorant}
Let $n>0$, let $d\ge1$ be an integer, and let
$\alpha,\beta,\kappa\in\R$.
Suppose $f:[0,n]\to\R$ satisfies
\[
 f(s)\ge\alpha+\beta s
 \quad\text{for every }s\in[0,n],
 \qquad
 d\alpha+n\beta\ge\frac{\kappa}{n}.
\]
Then
\[
 \sum_{j=1}^d f(s_j)\ge\frac{\kappa}{n}
 \quad\text{whenever }s_j\ge0,\quad
 \sum_{j=1}^d s_j=n.
\]
In particular, it is sufficient that
\begin{equation}\label{eq:minorant}
 f(s)\ge f(0)+\frac{s}{n}
 \left(\frac{\kappa}{n}-df(0)\right),
 \qquad 0\le s\le n.
\end{equation}
If \eqref{eq:minorant} is strict for $s>0$, the resulting
summed inequality is strict.
\end{lemma}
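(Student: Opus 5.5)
The plan is to sum the affine minorant over the coordinates and use only the linear constraint $\sum_j s_j=n$. Fix $s_1,\dots,s_d\ge0$ with $\sum_j s_j=n$. Each $s_j$ then lies in $[0,n]$, so the hypothesis $f(s_j)\ge\alpha+\beta s_j$ applies to every $j$. Summing over $j$ gives
\[
 \sum_{j=1}^d f(s_j)\ge d\alpha+\beta\sum_{j=1}^d s_j=d\alpha+n\beta\ge\frac{\kappa}{n},
\]
where the last step is the second hypothesis. This proves the main claim.

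For the sufficient condition \eqref{eq:minorant}, take
\[
 \alpha=f(0),\qquad \beta=\frac1n\Bigl(\frac{\kappa}{n}-df(0)\Bigr).
\]
With this choice the pointwise hypothesis is exactly \eqref{eq:minorant}. The second hypothesis holds with equality:
\[
 d\alpha+n\beta=df(0)+\frac{\kappa}{n}-df(0)=\frac{\kappa}{n}.
\]
So the first part applies directly.

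For strictness, suppose \eqref{eq:minorant} is strict for $s>0$. Because $n>0$, at least one $s_j$ is positive. That index gives a strict inequality in the termwise bound, while the remaining indices give non-strict ones. Adding them yields
\[
 \sum_{j=1}^d f(s_j)>d\alpha+n\beta=\frac{\kappa}{n}.
\]
No step presents a real obstacle, since the argument is a direct summation. The only point that needs checking is that the constraint forces each $s_j\in[0,n]$, so the hypothesis on $f$ applies to every coordinate.
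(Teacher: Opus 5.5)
Your proposal is correct and follows the paper's proof essentially step for step. Both arguments observe that each $s_j\in[0,n]$, sum the affine bound, and choose $\alpha=f(0)$, $\beta=\frac1n\bigl(\frac{\kappa}{n}-df(0)\bigr)$ so that $d\alpha+n\beta=\kappa/n$. Both then get strictness from the fact that some $s_j>0$.
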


\begin{proof}
Since the $s_j$ are nonnegative and sum to $n$, each belongs
to $[0,n]$. Applying the assumed bound
$f(s_j)\ge\alpha+\beta s_j$ for each $j$ and summing gives
\[
 \sum_{j=1}^d f(s_j)
 \ge d\alpha+\beta\sum_{j=1}^d s_j
 =d\alpha+n\beta
 \ge\frac{\kappa}{n}.
\]
For \eqref{eq:minorant}, take
\[
 \alpha=f(0),
 \qquad
 \beta=\frac1n\left(\frac{\kappa}{n}-df(0)\right),
\]
so that $d\alpha+n\beta=\kappa/n$.
Finally, at least one $s_j$ is positive because their sum
is $n>0$. If \eqref{eq:minorant} is strict for positive
arguments, at least one summand inequality is strict,
and hence so is their sum.
\end{proof}

If $f$ is concave, its endpoint chord gives
\[
 f(s)\ge f(0)+\frac{s}{n}\bigl(f(n)-f(0)\bigr),
\]
and hence $\sum_jf(s_j)\ge f(n)+(d-1)f(0)$.
This is the concavity argument used in dimension three \cite{Alpay2026}. For the quadratic
field, we prove \eqref{eq:minorant} directly. The paired function need
not be concave: in dimension six,
$f_{6,3}''(1)=4096/65219>0$.\\

Keep the notation in \eqref{eq:pqE}, \eqref{eq:D}, and \eqref{eq:N}. Define
\begin{align}
 H&=(4r+2)n(2n+1)-rE\notag\\
  &=(r+1)\left(4n^2-2(r-1)n-\frac{r(r+5)}3\right),\label{eq:H}\\
 J&=pn+q.\notag
\end{align}
Set
\begin{align}
 \mathsf A&=16q^2H,\notag\\
 \mathsf B&=H(8qE-16J^2)+16n^2q\bigl[E-q(2n+1)\bigr],\notag\\
 \mathsf G&=HE^2+n^2\bigl[4E^2-4q(2n+1)E-16(n+1)JE+16(2n+1)J^2\bigr].\label{eq:general-coefficients}
\end{align}
Since $d=4r+2$ and $C_d=4r^2$, subtraction of the affine expression
in \eqref{eq:minorant} yields
\begin{align}
 &f_{d,n}(s)-f_{d,n}(0)
 -\frac{s}{n}\left(\frac{4r^2}{n}-(4r+2)f_{d,n}(0)\right)\notag\\
 &\qquad=\frac{4rs}{n^2ED}
       \bigl[\mathsf A(n)s^2+\mathsf B(n)s+\mathsf G(n)\bigr].
 \label{eq:gap-general}
\end{align}
Multiplying the left-hand side by $n^2ED$ gives
\begin{align*}
 4r n^2EN-4r n^2D(2n+1)
 -sD\bigl[4r^2E-4r(4r+2)n(2n+1)\bigr].
\end{align*}
Collecting powers of $s$ gives
$4rs(\mathsf A s^2+\mathsf B s+\mathsf G)$, with the coefficients in
\eqref{eq:general-coefficients}. It remains to prove that this quadratic
polynomial is positive.

\begin{lemma}[Positivity in dimensions five through seven]
\label{lem:quadratic-positivity}
For $d\in\{5,6,7\}$, take $r=(d-2)/4$ in
\eqref{eq:general-coefficients}. Then
\[
 \mathsf A(n)s^2+\mathsf B(n)s+\mathsf G(n)>0
 \qquad\text{for all real }n\ge L_d,\quad s\ge0,
\]
with the thresholds in \eqref{eq:thresholds}.
\end{lemma}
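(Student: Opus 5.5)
For each fixed $d\in\{5,6,7\}$ I would substitute the numerical values of $r$, $p$, $q$ into \eqref{eq:general-coefficients}. This makes $\mathsf A$, $\mathsf B$, $\mathsf G$ explicit polynomials in $n$, of degrees $2$, $4$, $6$ respectively, since $H$ is quadratic and $E$, $J$ are quadratic and linear. Then I would write $n=L_d+t$ with $t\ge0$ and re-expand everything in $t$.

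The first step is the leading coefficient. Up to a positive constant, $H$ is $4n^2-2(r-1)n-r(r+5)/3$. For $r\le 5/4$ its larger root is below $1$, so $H>0$ and hence $\mathsf A=16q^2H>0$ for $n\ge L_d$. After this shift $H$ should have only positive coefficients in $t$, which I would check directly.

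The second step is a completion of the square, as the paper indicates:
\[
 \mathsf A s^2+\mathsf B s+\mathsf G
 =\mathsf A\bigl(s-\sigma\bigr)^2+(\mathsf B+2\mathsf A\sigma)s+(\mathsf G-\mathsf A\sigma^2),
\]
where the shift $\sigma=\sigma_d(n)\ge0$ is chosen so that both remainder terms are polynomials in $t$ with strictly positive coefficients. The bare choice $\sigma=0$ only works if $\mathsf B\ge0$, and I expect $\mathsf B$ to be negative, since $-16HJ^2$ is of size $n^4$ against $8qHE$. So I would instead take $\sigma$ linear in $n$, say $\sigma=cn+c'$ with rational $c,c'$. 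The linear coefficient $\mathsf B+2\mathsf A\sigma$ is then quartic in $n$, and its leading $n^4$ term can be made positive by choosing $c$. The constant term $\mathsf G-\mathsf A\sigma^2$ is sextic, and its leading coefficient needs $\mathsf G$'s $n^6$ coefficient to exceed $64q^2c^2$ times the appropriate factor. This is essentially a discriminant condition at leading order. Once each remainder, expanded in $t$, has all coefficients positive, the quadratic is at least $\mathsf A(s-\sigma)^2\ge0$ plus strictly positive terms for $s\ge0$ (the constant term is positive), which gives the claimed strict inequality.

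The main obstacle is the choice of $\sigma$. It must be large enough to absorb the negative part of $\mathsf B$ but small enough that $\mathsf G-\mathsf A\sigma^2$ stays positive, and this must hold all the way down to the threshold $t=0$, where margins are tightest. This is presumably why $L_d$ grows with $d$. I would first compute the leading-order asymptotics in $n$ to locate the admissible window for $c$, pick a simple rational value in it, and then tune $c'$ to make the $t^0$ and $t^1$ coefficients positive. If no affine $\sigma$ works near $t=0$, I would fall back on two checks: the discriminant $\mathsf B^2-4\mathsf A\mathsf G$ being negative (or $\mathsf B\ge0$), and Sturm/interval verification on the compact range $n\in[L_d,L_d+T]$, treating large $n$ via the positive-coefficient expansion. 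The resulting explicit polynomial identities would go in Appendix~\ref{app:polynomials}.
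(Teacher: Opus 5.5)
Your plan matches the paper's proof: the paper shows $\mathsf A=16q^2H>0$ and completes the square with the affine shift $\sigma=n/4+c_d$, where $c_d=\tfrac12,\tfrac12,\tfrac25$ for $d=5,6,7$, so that after division by a positive constant $\theta_d$ the linear and constant remainders are polynomials in $t=n-L_d$ with strictly positive coefficients (Appendix~\ref{app:polynomials}). One correction to your degree count: the $n^6$ and $n^5$ coefficients of $\mathsf G$ cancel, so $\mathsf G$ and $\mathsf G-\mathsf A\sigma^2$ are quartic and the window for $c$ comes from the $n^4$ coefficients; also $\mathsf B$ has positive leading coefficient $\tfrac{64}{3}(r+1)(3+r-r^2)$, so the shift is needed only because $\mathsf B$ is negative near the threshold (e.g.\ $\mathsf B=128n^4-1664n^3-960n^2+768n+512$ for $d=6$), not asymptotically.
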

\begin{proof}
Appendix~\ref{app:polynomials} gives positive constants
$\theta_d,\eta_d$, constants $c_d$, and polynomials $Q_d,T_d$
with strictly positive coefficients such that, for $t=n-L_d$,
\begin{equation}\label{eq:positive-square}
 \frac{\mathsf A s^2+\mathsf B s+\mathsf G}{\theta_d}
 =\frac{\mathsf A}{\theta_d}
       \left(s-\frac n4-c_d\right)^2
   +\eta_d Q_d(t)s+T_d(t).
\end{equation}
Since $3/4\le r\le3/2$ and $n\ge L_d\ge2$,
\[
 H=(r+1)\left(4n^2-2(r-1)n-\frac{r(r+5)}3\right)
 \ge(r+1)\left(4n^2-n-\frac{13}{4}\right)>0.
\]
Here
$4n^2-n-13/4=4(n-2)^2+15(n-2)+43/4$.
Since $q>0$, we obtain $\mathsf A=16q^2H>0$.
For $t,s\ge0$, the first two terms on the right of
\eqref{eq:positive-square} are nonnegative, and $T_d(t)>0$.
Multiplying by $\theta_d>0$ proves the result.
\end{proof}

\begin{proof}[Proof of Proposition~\ref{prop:exterior}]
By Lemma~\ref{lem:quadratic-positivity}, the right-hand side of
\eqref{eq:gap-general} is positive for $s>0$ and zero for $s=0$,
whenever $n\ge L_d$. Its denominators are positive by the preceding
pairing calculation. Apply Lemma~\ref{lem:minorant} to
\eqref{eq:W-paired}, with $\kappa=C_d$ and $s_j=x_j^2$.
Because $n>0$, at least one $s_j$ is positive. The strict estimate
\eqref{eq:exterior-weight} follows.
\end{proof}

\section{The weight near the origin}\label{sec:local}

The edge factors used below are unchanged under coordinate
permutations and independent sign changes of the coordinates,
applied simultaneously to both endpoints of each edge.
These transformations also preserve nearest-neighbor adjacency.
Consequently, vertices related by such transformations have the
same weight, and it suffices to compute the weight at one
representative of each symmetry class.
These are pointwise bounds; no symmetry assumption is imposed
on the test function $u$.

\subsection{Dimensions four and five}

For $d=4,5$, the base field gives the required estimate
outside the first shell. The dimension-four exterior
estimate is verified in Remark~\ref{rem:d4-check}.
At the origin,
\[
 W_d^0(0)=\frac{4dr}{q}>0.
\]

A first-shell vertex is a signed permutation of $e_1$. Its neighbors have
squared radii $0$ once, $4$ once, and $2$ with multiplicity $2(d-1)$, so
\begin{equation}\label{eq:first-shell-formula}
 W_d^0(e_1)-C_d
 =2d-a_d(1,0)-a_d(1,4)-2(d-1)a_d(1,2)-C_d.
\end{equation}

For $d=4$, direct substitution gives
\[
 W_4^0(0)=\frac{32}{11},
 \qquad
 W_4^0(e_1)-1=\frac{499}{1645}>0.
\]
Together with the exterior estimate indicated above, this recovers the
known value $\lambda_4=1$ by the present reciprocal-edge method.

For $d=5$,
\[
 a_5(1,0)=\frac{161}{89},\qquad
 a_5(1,4)=\frac{451}{811},\qquad
 a_5(1,2)=\frac{401}{617},
\]
and therefore
\[
 W_5^0(0)=\frac{720}{161}>0,
 \qquad
 W_5^0(e_1)-\frac94
 =\frac{33054525}{178137772}>0.
\]
Combining this with Proposition~\ref{prop:exterior} proves that the base
weight is strictly above $9/(4|x|^2)$ at every nonzero vertex of $\Z^5$.

\subsection{Dimension six}
Start with the base field on the full lattice. Change only the following
ordered-edge values, assigning reciprocal values on reversal:
\begin{equation}\label{eq:d6-modification}
 a^{(6)}_{0,e}=\frac23\quad(|e|=1),\qquad
 a^{(6)}_{xy}=\frac7{12}
 \quad(|x|^2=1,\ |y|^2=2,\ x\sim y).
\end{equation}
The other factors remain $a^{0,6}_{xy}$. Since both endpoints of each
changed edge lie in $\{|x|^2\le2\}$, the weight is unchanged when
$|x|^2\ge3$. Proposition~\ref{prop:exterior} therefore still applies
at those vertices.

Let $W_6(x)=12-\sum_{y\sim x}a^{(6)}_{xy}$.
At the origin,
\begin{equation}\label{eq:d6-origin}
 W_6(0)=12\left(1-\frac23\right)=4.
\end{equation}
If $|x|^2=1$, the neighboring squared radii are $0$ once, $4$ once,
and $2$ ten times. Since $a_6(1,4)=14/29$,
\begin{equation}\label{eq:d6-shell1}
 W_6(x)=12-\frac32-\frac{14}{29}-10\frac7{12}
       =4+\frac{16}{87}>4.
\end{equation}
If $|x|^2=2$, the neighboring squared radii are $1$ twice, $5$ twice,
and $3$ eight times. The unchanged factors are
$a_6(2,5)=23/44$ and $a_6(2,3)=11/16$, so
\begin{equation}\label{eq:d6-shell2}
 W_6(x)=12-2\frac{12}{7}-2\frac{23}{44}-8\frac{11}{16}
       =2+\frac2{77}>2.
\end{equation}
Every vertex with squared norm one or two is a signed permutation of
$e_1$ or $e_1+e_2$, respectively. Thus the two calculations cover all remaining nonzero vertices.

\subsection{Dimension seven}
Work on $X_7=\Z^7\setminus\{0\}$. Change only
\begin{equation}\label{eq:d7-modification}
 a^{(7)}_{xy}=\frac47\quad(n=1,m=2),\qquad
 a^{(7)}_{xy}=\frac{16}{25}\quad(n=2,m=3),
\end{equation}
with reciprocal values on reversal. All other factors on edges with
nonzero endpoints remain unchanged. At a vertex of squared norm at least
four, no incident edge has been changed and no neighbor is the origin.
Its punctured weight therefore equals the base weight, so
Proposition~\ref{prop:exterior} applies.

For $n=1,2,3$, a vertex of squared norm $n$ has exactly $n$ coordinates
equal to $\pm1$, the rest being zero. Changing a nonzero coordinate
produces a neighbor of squared radius $n-1$ or $n+3$; changing a zero
coordinate produces two neighbors of squared radius $n+1$.
The needed base ratios are
\[
 a_7(1,4)=\frac{451}{1051},\quad
 a_7(2,5)=\frac{143}{311},\quad
 a_7(3,6)=\frac{41}{81},\quad
 a_7(3,4)=\frac{683}{963}.
\]
Writing $V_{7,n}$ for the punctured weight on each of these shells,
we obtain
\begin{align}
 V_{7,1}-\frac{25}{4}
 &=14-\frac{451}{1051}-12\frac47-\frac{25}{4}
 =\frac{13647}{29428}>0,\label{eq:d7-shell1}\\
 V_{7,2}-\frac{25}{8}
 &=14-2\frac74-2\frac{143}{311}-10\frac{16}{25}-\frac{25}{8}
 =\frac{689}{12440}>0,\label{eq:d7-shell2}\\
 V_{7,3}-\frac{25}{12}
 &=14-3\frac{25}{16}-3\frac{41}{81}-8\frac{683}{963}-\frac{25}{12}
 =\frac{1697}{46224}>0.\label{eq:d7-shell3}
\end{align}
The first calculation retains the origin-edge contribution in the diagonal
$14$, as required by Proposition~\ref{prop:punctured}.
By the classification above, these calculations cover the remaining
nonzero vertices.

\section{Dimension eight}\label{sec:d8}

In dimension eight we use a different rational reciprocal field. Its
vertex weight has the required lower bound for $|x|^2\ge13$. We then
modify the field inside
\begin{equation}\label{eq:d8-core}
 \mathcal S=S_{12}:=\{x\in\Z^8:1\le |x|^2\le12\}.
\end{equation}
The correction is given by $26$ positive integers.

\subsection{The reciprocal field}

For $S=n+m$ and $\delta=m-n$, put
\begin{align}
 P(S,\delta)
 & =768S^4-3981S^3+5808S^2-5856S-480\notag\\
 &\qquad +(2000S^2+5184S-3648)\delta^2-3840\delta^4,
 \label{eq:d8-P}\\
 Q(S,\delta)
 & =\delta(1152S^3-4902S^2+2976S+4416+11520\delta^2),
 \label{eq:d8-Q}
\end{align}
and define
\begin{equation}\label{eq:d8-rational-field}
 \widehat a_8(n,m)=\frac{P(S,\delta)-Q(S,\delta)}
                         {P(S,\delta)+Q(S,\delta)}.
\end{equation}
We use this formula on edges with at least one endpoint outside
$\mathcal S$.

\begin{lemma}\label{lem:d8-factor-positive}
If $S\ge16$ and $\delta^2\le2S-1$, then $P(S,\delta)>|Q(S,\delta)|$.
Consequently, the factors in \eqref{eq:d8-rational-field} are positive
and reciprocal on every edge where they are used.
\end{lemma}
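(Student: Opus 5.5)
Since $P$ is even in $\delta$ and $Q$ is odd, the claim $P>|Q|$ is equivalent to $P-Q>0$ and $P+Q>0$ on the region $S\ge16$, $\delta^2\le 2S-1$. By the symmetry $\delta\mapsto-\delta$ it suffices to prove $P(S,\delta)>|Q(S,\delta)|$ for $0\le\delta\le\sqrt{2S-1}$. Put $u=\delta^2\in[0,2S-1]$. The lattice-edge constraint comes from $m-n=\pm(2x_j\pm1)$ and $|x_j|\le\sqrt n$: for $m=n+1\pm2x_j$ we have $\delta^2=(1\pm 2x_j)^2\le 1+4|x_j|+4x_j^2$, and $S=2n+1\pm 2x_j$. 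One checks this gives $\delta^2\le 2S-1$, with equality when $n=x_j^2$. It also gives $S\ge 16$ on every edge with an endpoint of squared norm at least $13$: the smallest value of $S$ on such an edge is $13+12=25$ or, in the worst case, $n=13$, $m=(\sqrt{13}-1)^2$. So the hypotheses cover every edge where \eqref{eq:d8-rational-field} is used, and positivity plus the reciprocity $\widehat a_8(m,n)=\widehat a_8(n,m)^{-1}$ (from $\delta\mapsto-\delta$) will follow.

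The main inequality is handled by bounding the two sides separately in terms of $S$. First, $P$ is a quadratic in $u$ with negative leading coefficient $-3840$, so it is concave in $u$. Hence its minimum over $u\in[0,2S-1]$ is attained at an endpoint. Set $P_0(S)=P(S,0)$ and $P_1(S)=P(S,\sqrt{2S-1})$. Then $P_0$ is a quartic with leading term $768S^4$, and $P_1=768S^4+(4000-3981)S^3+\dots$, using $-3840(2S-1)^2=-15360S^2+\dots$. Both should be easily positive for $S\ge16$ after substituting $S=16+t$ and checking that the coefficients are positive.

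Second, for $Q$ we have $|Q|\le\delta\,(1152S^3+2976S+4416+11520u)$ on $S\ge16$, since $1152S^3-4902S^2>0$ there. Thus $|Q|\lesssim \sqrt{2S}\cdot 1152S^3\approx 1629S^{3.5}$, while $P\gtrsim 768S^4$. The ratio $1629/(768\sqrt S)$ is about $0.53$ at $S=16$, so there is comfortable room for the crude bound. However, the lower-order terms of $P_1$ are negative of size $\sim 15000S^2$, so I would avoid splitting the square root crudely.

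Instead I would square: since $P>0$, the claim is equivalent to $P^2-Q^2>0$. This quantity is a polynomial in $(S,u)$ of degree $4$ in $u$ (from $Q^2=u(\cdots+11520u)^2$, cubic in $u$, and $P^2$ quartic in $u$). I would then substitute $u=(2S-1)v$ with $v\in[0,1]$ and $S=16+t$, and show the result has positive coefficients in $t$ for each power of $v$. Alternatively, I would bound it below by a combination that is manifestly nonnegative on $[0,1]$, such as $v(1-v)$ terms. The main obstacle is the sign structure in $v$: the $v^3$ and $v^4$ terms in $P^2-Q^2$ are negative, with leading behaviour $-11520^2(2S)^3v^3$ against $768^2S^8$. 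If the coefficients are not all positive, I would use $v\le1$ to absorb negative higher powers of $v$ into lower ones, replacing $v^k$ by $v^{k-1}$ where the coefficient is negative. This reduces the problem to one-variable polynomial inequalities in $t\ge0$ with positive coefficients, analogous to Appendix~\ref{app:d8-polynomials}.
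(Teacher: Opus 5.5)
Your reductions are sound. $P>|Q|$ is equivalent to $P\pm Q>0$. On a lattice edge $\delta^2-(2S-1)=4(x_j^2-|x|^2)\le 0$. When an endpoint has $n\ge13$ you get $S\ge n+(\sqrt n-1)^2>19$. (Your ``$13+12=25$'' is not the minimum, since the edge from $(3,2,0,\dots,0)$ to $(2,2,0,\dots,0)$ has $S=21$, but your worst-case bound is the right one.) Concavity of $P$ in $u=\delta^2$ also correctly reduces $P>0$ to the two endpoints.

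\textbf{The gap.} The lemma's actual content, $P>|Q|$, is never established. You replace it by $P^2-Q^2>0$ and then only say what you would check, plus a fallback if the check fails. The sign structure you anticipate is also wrong. Write $P=p_0+p_1u-3840u^2$ and $Q=\delta(q_0+11520u)$, with $p_0,p_1,q_0$ the evident polynomials in $S$. Then
\[
 P^2-Q^2=p_0^2+(2p_0p_1-q_0^2)u+(p_1^2-7680p_0-23040q_0)u^2
 -(7680p_1+11520^2)u^3+3840^2u^4 .
\]
The $u^4$ term is positive. The $u^2$ term is negative for $S\ge16$; its leading coefficient is $2000^2-7680\cdot768<0$, a term you did not flag. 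The $u^3$ term is dominated by $-7680p_1\approx-1.5\cdot10^7S^2$, not by $-11520^2$. Your absorption device does close: pushing the negative $v^2$ and $v^3$ terms down to $v$ leaves a positive linear coefficient for $S\ge16$. But verifying that for a degree-eight, two-variable polynomial is exactly the part you have not done.

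\textbf{The paper's route.} The crude split you rejected is what the paper uses, and it needs no squaring.

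For $P$: since $\delta^4\le(2S-1)\delta^2$, the $\delta^2$-coefficient in the resulting lower bound for $P/768$ is $\frac{125}{48}S^2-\frac{13}{4}S+\frac14>0$. Hence $P\ge P(S,0)\ge 768S^3(S-6)$. Your concern about the $-15360S^2$ in $P_1$ disappears once terms are collected: $P_1=768S^4+19S^3-1184S^2-2976S-672$. Moreover $P_1>P_0$, so the minimum over $u$ is at $u=0$ anyway.

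For $Q$: instead of discarding $-4902S^2$, the paper uses it, together with $\delta^2\le 2S-1$, to absorb $11520\delta^2+2976S+4416$. This gives $|Q|\le 1152\sqrt{2S-1}\,S^3$.

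The claim then reduces to $S-6>\frac32\sqrt{2S-1}$, i.e.\ $(S-16)^2+\frac{31}{2}(S-16)+\frac{121}{4}>0$. This uses only one-variable inequalities of degree at most four.
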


\begin{proof}
The inequality $\delta^4\le(2S-1)\delta^2$ gives
\begin{align*}
 \frac{P}{768}
 &\ge S^4-\frac{1327}{256}S^3+\frac{121}{16}S^2
       -\frac{61}{8}S-\frac58
       +\left(\frac{125}{48}S^2-\frac{13}{4}S+\frac14\right)\delta^2\\
 &\ge S^3(S-6).
\end{align*}
For the second inequality, the coefficient of $\delta^2$ is positive
for $S\ge2$, and the remaining difference is
\[
 \frac{209}{256}S^3+\frac{121S^2-122S-10}{16}>0.
\]
Indeed, the numerators of these two quadratic expressions, expanded
at $S=2$, are
\[
 125(S-2)^2+344(S-2)+200,
 \qquad
 121(S-2)^2+362(S-2)+230.
\]
Also
\[
 \frac{Q}{768}
 =\delta\left(\frac32S^3-\frac{817}{128}S^2
                 +\frac{31}{8}S+\frac{23}{4}+15\delta^2\right).
\]
The expression in parentheses is positive for $S\ge16$, since
$\frac32S-\frac{817}{128}>0$. It is at most $\frac32S^3$:
using $\delta^2\le2S-1$, the required inequality reduces to
\[
 \frac{817}{128}S^2-\frac{271}{8}S+\frac{37}{4}>0.
\]
After multiplication by $128$ and expansion at $S=16$, its left-hand
side has numerator
$817(S-16)^2+21808(S-16)+140960$.
Thus
\[
 \frac{|Q|}{768}\le\frac32\sqrt{2S-1}\,S^3.
\]
Since $S-6>0$ and
\[
 (S-6)^2-\frac94(2S-1)
 =(S-16)^2+\frac{31}{2}(S-16)+\frac{121}{4}>0,
\]
we obtain $P>|Q|$.

For a lattice edge $y=x\pm e_j$,
\[
 \delta^2-(2S-1)=4(x_j^2-|x|^2)\le0.
\]
If an endpoint has squared norm $n\ge13$, then the triangle
inequality gives
\[
 S\ge n+(\sqrt n-1)^2\ge27-2\sqrt{13}>19.
\]
As $S$ is an integer, $S\ge20$. The positivity just proved therefore
applies. Finally, $P(S,-\delta)=P(S,\delta)$ and
$Q(S,-\delta)=-Q(S,\delta)$, so interchanging the endpoints replaces
$\widehat a_8(n,m)$ by its reciprocal.
\end{proof}

To apply Lemma~\ref{lem:gluing}, define a field on all edges with
nonzero endpoints by
\begin{equation}\label{eq:d8-base-field}
 a^0_{xy}=
 \begin{cases}
  1,&x,y\in\mathcal S,\\
  \widehat a_8(|x|^2,|y|^2),&\text{otherwise}.
 \end{cases}
\end{equation}
This field is positive and reciprocal. Its values on edges inside
$\mathcal S$ will be replaced by ratios of a positive function.
Write
\begin{equation}\label{eq:d8-base-weight}
 V_8^0(x)=16-\sum_{\substack{y\sim x\\y\ne0}}a^0_{xy}.
\end{equation}

\subsection{The exterior estimate}

\begin{proposition}\label{prop:d8-exterior}
For every $x\in\Z^8$ with $|x|^2\ge13$,
\begin{equation}\label{eq:d8-exterior-sharp}
 V_8^0(x)>\frac9{|x|^2}.
\end{equation}
\end{proposition}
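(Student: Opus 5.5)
Fix $x$ with $n=|x|^2\ge13$. Such an $x$ lies outside $\mathcal S$ and is not adjacent to the origin, and every incident edge carries the factor $\widehat a_8(n,m)$. By Lemma~\ref{lem:d8-factor-positive}, each such factor is positive and well defined. The plan copies the argument for Proposition~\ref{prop:exterior}: pair opposite neighbors. Using $|x\pm e_j|^2=n+1\pm2x_j$, set
\[
 g_n(s)=2-\widehat a_8(n,n+1+2\sqrt s)-\widehat a_8(n,n+1-2\sqrt s),\qquad 0\le s\le n,
\]
so that $V_8^0(x)=\sum_{j=1}^8 g_n(x_j^2)$. This is the analogue of \eqref{eq:W-paired}. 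Since $\widehat a_8(n,m)$ is odd in $\delta$ after the transformation $a\mapsto(1-a)/(1+a)$, namely $1-\widehat a_8=2Q/(P+Q)$, the two summands combine over the common denominator $D_8=(P+Q)_+(P+Q)_-$, where $\pm$ denotes $\sqrt s\mapsto\pm\sqrt s$. Because $P$ is even in $\delta$ and $Q$ is odd, all odd powers of $\sqrt s$ cancel. Hence $g_n(s)=N_8(n,s)/D_8(n,s)$ with $N_8,D_8$ polynomials in $(n,s)$, and $D_8>0$ on the relevant range by Lemma~\ref{lem:d8-factor-positive}. This positivity needs $S=2n+1\pm2\sqrt s\ge20$ and $\delta^2\le 2S-1$, which were checked in that lemma.

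Next I apply Lemma~\ref{lem:minorant} with $d=8$ and $\kappa=9$, reducing to the affine minorant \eqref{eq:minorant}:
\[
 g_n(s)-g_n(0)-\frac sn\Bigl(\frac9n-8g_n(0)\Bigr)\quad\text{is}\ >0\ \text{for }0<s\le n,\ n\ge13.
\]
Multiplying by $n^2D_8(n,0)D_8(n,s)>0$ gives a polynomial that vanishes at $s=0$. Dividing by $s$ leaves a polynomial $R(n,s)$. Since $P,Q$ have degree $4$ in $(S,\delta)$ and $\delta^2$ is linear in $s$ after pairing, I expect $R$ to be \emph{quartic} in $s$ rather than quadratic as in dimensions five to seven. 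This matches the introduction's description. The task is then to show $R(n,s)>0$ for real $n\ge13$ and $0\le s\le n$.

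The main obstacle is this quartic positivity. I would first try to bound $R$ below by a quadratic in $s$. I would control the $s^4$ and $s^3$ terms using $0\le s\le n$, either through $s^4\ge0$ when its coefficient is positive, or through $s^3\le ns^2$ and $s^4\le n s^3\le n^2s^2$ when the coefficients are negative. This produces $R\ge\mathsf A_8(n)s^2+\mathsf B_8(n)s+\mathsf G_8(n)$. I would then complete the square exactly as in \eqref{eq:positive-square}, centering near $s\approx n/4+c$. The aim is to write the result with $t=n-13$ as a nonnegative square plus $sQ_8(t)+T_8(t)$, where $Q_8$ and $T_8$ have positive coefficients, and to verify the coefficient expansions in $t$ symbolically; these are the identities referred to Appendix~\ref{app:d8-polynomials}.

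If the crude bounds on the higher-order terms lose too much for small $n$, I would split the range. For large $n$ the leading coefficient dominates. For $13\le n\le n_0$, where only finitely many integer $n$ occur at lattice points, I would check the finitely many admissible values $s=x_j^2$ directly. Strictness then follows from the strict positivity for $s>0$, as in the proof of Proposition~\ref{prop:exterior}.
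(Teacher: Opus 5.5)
\textbf{Verdict: genuine gap.} Your setup matches the paper: pairing opposite neighbours, $1-\widehat a_8=2Q/(P+Q)$, the common denominator, Lemma~\ref{lem:minorant} with $d=8$ and $\kappa=9$, and the quartic $R$ of \eqref{eq:d8-R}. But the content of the proposition is the positivity of $R$, and the route you sketch for it fails at two points.

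\textbf{The stated target is false.} You aim to show $R(n,s)>0$ for all real $n\ge13$ and $0\le s\le n$. Write $p_n(v)=R(n,nv)$. The identity $p_n(2/7)=B_0(n)/2401$ from \eqref{eq:d8-quartic-identities} is a polynomial identity in $n$. Evaluating the polynomial in Appendix~\ref{app:d8-polynomials} gives $B_0(13)\approx-9.5\times10^{24}$, so $R(13,26/7)\approx-3.9\times10^{21}<0$. Yet $R(13,1)$, $R(13,4)$, $R(13,9)$ in Table~\ref{tab:d8-small-radii} are all positive. By \eqref{eq:d8-minorant-gap}, the affine minorant is therefore false at $n=13$ for real $s$ near $26/7$. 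So no decomposition as a square plus $sQ_8(t)+T_8(t)$, with positive coefficients in $t=n-13$, can exist. For small $n$ you must use that $s=x_j^2$ is a perfect square. This is why the paper proves the real-variable bound only for $n\ge17$ and checks the thirteen values $R(n,j^2)$, $13\le n\le16$, directly.

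\textbf{The quadratic reduction is too lossy.} Even where the real-variable bound holds, replacing the quartic by a quadratic via $s^4\ge0$ and $s^3\le ns^2$ gives away too much. The quartic $p_n$ has a narrow, shallow well near $v\approx2/7$. Its value there is of order $10^{10}n^{11}$, while $|a_3|\approx1.7\times10^{15}n^9$. Replacing $a_3v^3$ by $a_3v^2$ costs $|a_3|v^2(1-v)\approx0.058|a_3|$ at $v=2/7$. At $n=17$ this loss is about thirty times $p_{17}(2/7)\approx3.6\times10^{23}$, so your quadratic is already negative there. A leading-order estimate suggests it becomes positive only around $n\approx80$, which would leave hundreds of lattice values to check.

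\textbf{The missing idea} is to make the comparison quadratic agree with $p_n$ to second order at the bottom of the well. The paper takes the exact Taylor expansion of $p_n$ at $\theta=2/7$. It bounds the bracket $a_2+a_3(v+2\theta)+a_4(v^2+2\theta v+3\theta^2)$ below by $K$ on $[0,1]$, using $a_3<0<a_4$. The cubic and quartic losses then appear only with the factor $(v-\theta)^2$. Positivity reduces to $K>0$ and $4KB-L^2>0$, which Lemma~\ref{lem:d8-quartic} proves for $n\ge17$. Your fallback of splitting the range and checking lattice values has the right structure. Without a bound of this kind and an explicit threshold, however, the quartic positivity, and hence the proposition, is not established.
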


For real $n\ge13$ and $0\le s\le n$, define
\[
 f_n(s)=2-\widehat a_8(n,n+1+2\sqrt{s})
         -\widehat a_8(n,n+1-2\sqrt{s}).
\]
These evaluations are well defined. For either choice of sign,
\[
 S=2n+1\pm2\sqrt{s}\ge2n+1-2\sqrt n>16,
 \qquad
 \delta^2-(2S-1)=4(s-n)\le0,
\]
so Lemma~\ref{lem:d8-factor-positive} applies. Pairing opposite
neighbors gives
\begin{equation}\label{eq:d8-pairing}
 V_8^0(x)=\sum_{j=1}^8 f_n(x_j^2),\qquad n=|x|^2\ge13.
\end{equation}
There is no neighbor at the origin in this range. If $x_j<0$, the two
arguments in the definition of $f_n(x_j^2)$ are interchanged.

We will prove
\begin{equation}\label{eq:d8-minorant}
 f_n(s)\ge f_n(0)+\frac{s}{n}\left(\frac9n-8f_n(0)\right),
\end{equation}
strictly for $s>0$, for all real $n\ge17$ and $0\le s\le n$.
For $n=13,14,15,16$, we prove this inequality only when $s$ is the
square of an integer and $s\le n$.

Put
\begin{align}
 E(n,s)&=12288n^4+1944n^3-3892n^2+5402n+11117\notag\\
       &\qquad+4(40256n^2+30042n+29079)s+1280s^2,\notag\\
 O(n,s)&=2(33792n^3+13132n^2+26100n+32615)
          +56(3008n+1163)s,\notag\\
 U(n,s)&=6(1536n^3-964n^2-1124n+2527)\notag\\
       &\qquad+24(2304n^2+188n+4957)s+18432s^2,\notag\\
 T(n,s)&=12(1536n^3+1340n^2-2088n+5805)
          +48(1152n+1871)s.\label{eq:d8-paired-polynomials}
\end{align}
Substituting $S=2n+1+2z$ and $\delta=1+2z$ in
\eqref{eq:d8-P}--\eqref{eq:d8-Q} gives
\begin{align*}
 (P+Q)(2n+1+2z,1+2z)&=E(n,z^2)+zO(n,z^2),\\
 Q(2n+1+2z,1+2z)&=U(n,z^2)+zT(n,z^2).
\end{align*}
Set
\[
 D=E^2-sO^2,\qquad e=E(n,0),\qquad u_0=U(n,0).
\]
Here $D$ is the product of the two positive denominators at
$z=\sqrt{s}$ and $z=-\sqrt{s}$, and $e>0$. Since $1-\widehat a_8=2Q/(P+Q)$,
adding the two fractions gives
\[
 f_n(s)=\frac{4(UE-sTO)}{D},\qquad f_n(0)=\frac{4u_0}{e}.
\]
Define the polynomial $R(n,s)$ by
\begin{equation}\label{eq:d8-R}
 3sR(n,s)=4n^2e(UE-sTO)-4n^2u_0D-s(9e-32nu_0)D.
\end{equation}
The right-hand side vanishes at $s=0$; expansion gives a polynomial
of degree four in $s$ after division by $3s$. This definition also
specifies $R(n,0)$. Subtracting the affine expression yields
\begin{equation}\label{eq:d8-minorant-gap}
 f_n(s)-f_n(0)-\frac{s}{n}\left(\frac9n-8f_n(0)\right)
 =\frac{3sR(n,s)}{n^2eD}.
\end{equation}

\begin{lemma}\label{lem:d8-quartic}
For real $n\ge17$ and $0\le s\le n$, we have $R(n,s)>0$.
\end{lemma}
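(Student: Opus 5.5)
The approach mirrors Lemma~\ref{lem:quadratic-positivity}, adapted to a quartic in $s$. First I will expand \eqref{eq:d8-R} and write
\[
 R(n,s)=R_4(n)s^4+R_3(n)s^3+R_2(n)s^2+R_1(n)s+R_0(n),
\]
with explicit integer polynomials $R_i$ in $n$. Each is obtained by collecting powers of $s$ in $4n^2e(UE-sTO)-4n^2u_0D-s(9e-32nu_0)D$ and dividing by $3s$. Since $E,O,U,T$ have degree at most two in $s$, the coefficients are finite and the computation is mechanical. The domain is $n\ge17$, $0\le s\le n$. I will substitute $t=n-17$, so that $n=t+17$ and $s\in[0,n]$. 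The goal is to show that the polynomials appearing in a suitable decomposition have positive coefficients in $t$.

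As the introduction indicates, the main step is a quadratic lower bound for the quartic. The top coefficients $R_4,R_3$ will presumably contain negative parts; for example, $-sTO$ and the $s^2$ term of $D$ enter with mixed signs. To handle them I will use the constraint $s\le n$ through $s^4\le n s^3\le n^2s^2$ or $s^3\le ns^2$. This is applied to the negative part of each high coefficient, or more precisely to whatever combination $R_4s^4+R_3s^3$ has the wrong sign. The result is a bound
\[
 R(n,s)\ge \widetilde A(n)s^2+\widetilde B(n)s+\widetilde G(n),\qquad 0\le s\le n,
\]
with $\widetilde A=R_2+(\text{corrections of the form } nR_3^-,\ n^2R_4^-)$, $\widetilde B=R_1$ and $\widetilde G=R_0$. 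If $R_4>0$, I would rather keep it and write $R_4s^4+R_3s^3\ge -\frac{(R_3^-)^2}{4R_4}s^2$ via AM--GM. If $R_4<0$, the bound $s^4\le n^2 s^2$ is used instead.

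The resulting quadratic will then be handled by completing the square, as in \eqref{eq:positive-square}. I will find a center $\sigma n+c$, presumably near the critical scale $s\approx n/4$ suggested by the lower-dimensional case, and write
\[
 \widetilde A s^2+\widetilde B s+\widetilde G=\widetilde A\,(s-\sigma n-c)^2+\eta\,Q(t)s+T(t).
\]
This requires $\widetilde A>0$, and requires $Q$ and $T$ to have positive coefficients in $t=n-17$. Alternatively I will check the discriminant condition $\widetilde B^2<4\widetilde A\widetilde G$ whenever $\widetilde B<0$, by showing that $4\widetilde A\widetilde G-\widetilde B^2$, expanded in $t$, has all coefficients positive. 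Either certificate reduces the lemma to finitely many sign checks on integer coefficients, to be recorded in Appendix~\ref{app:d8-polynomials}.

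The main obstacle is the loss incurred in replacing the quartic by a quadratic. At $n=17$ the margin is presumably thin, since the threshold $17$ was chosen precisely where real-variable positivity starts; the integer cases $n=13,\dots,16$ are handled separately. A crude use of $s\le n$ may therefore destroy positivity near $s\approx n$. If this happens, I will split the $s$-range, for instance into $[0,n/2]$ and $[n/2,n]$, and on the upper piece substitute $s=n-v$ with $v\in[0,n/2]$, expanding in $v$ and $t$. Another option is to apply the reduction only to a fraction $\lambda s^4$ of the quartic term, leaving the rest nonnegative, and to tune the fraction $\lambda$ until all coefficients in $t$ become positive.
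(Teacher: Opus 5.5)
There is a genuine gap. The quartic-to-quadratic reduction you propose is tight only at an endpoint of $[0,n]$, and near $n=17$ it loses far more than the available margin. Write, as in the paper, $p_n(v)=R(n,nv)=\sum_{j=0}^4a_j(n)v^j$ with $v=s/n\in[0,1]$.

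By \eqref{eq:d8-quartic-identities}, the coefficients behave as follows:
\begin{itemize}
\item $a_4=1638400n^4Z_0(n)>0$ has degree $8$;
\item $a_3=-64n^3U_0(n)<0$ has degree $9$;
\item $a_2$ has leading term $\frac{16}{49}k_0n^{10}\approx9.3\times10^{13}n^{10}$.
\end{itemize}

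Your preferred AM--GM step, $a_4v^4+a_3v^3\ge-\frac{a_3^2}{4a_4}v^2$, subtracts from $a_2$ a quantity with leading term about $7\times10^{18}n^{10}$. So $\widetilde A$ becomes hugely negative.

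The other option, $v^3\le v^2$ (that is, $s^3\le ns^2$), costs $|a_3|v^2(1-v)$. Take $n=17$ and $v=\frac27$, which is close to the minimizer of $p_{17}$. There the loss is $\frac{20}{343}|a_3(17)|\approx1.2\times10^{25}$. But $p_{17}(\frac27)=B_0(17)/2401\approx3.6\times10^{23}$. So your quadratic minorant is negative inside $[0,n]$. Since its leading coefficient is positive there, $4\widetilde A\widetilde G-\widetilde B^2<0$, and neither completing the square nor the discriminant test can succeed.

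Your fallbacks do not repair this. On $[0,n/2]$ the bound $v^3\le\frac12v^2$ still loses $\frac{6}{343}|a_3(17)|\approx3.5\times10^{24}$ at $v=\frac27$, about ten times the margin. The substitution on the upper half never touches the region where $p_n$ is smallest. Tuning a fraction of the quartic term changes nothing, because $a_4v^4$ is negligible; the damage comes entirely from the cubic term.

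The missing idea is to make the reduction exact to second order at the near-minimizer rather than at an endpoint. The paper uses the exact Taylor identity at $\theta=\frac27$,
\[
 p_n(v)=p_n(\theta)+p_n'(\theta)(v-\theta)
 +\bigl[a_2+a_3(v+2\theta)+a_4(v^2+2\theta v+3\theta^2)\bigr](v-\theta)^2 .
\]
It then bounds only the bracket from below on $[0,1]$, by $K=a_2+\frac{11}{7}a_3+\frac{12}{49}a_4$. This evaluates the $a_3$ part at $v=1$ and the $a_4$ part at $v=0$, which is legitimate because $a_3<0$ and $a_4>0$. The loss, $\bigl[a_3(v-1)+a_4v(v+\frac47)\bigr](v-\theta)^2$, now carries the factor $(v-\theta)^2$ and is harmless near the minimizer.

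Positivity then reduces to $K>0$ and $4Kp_n(\theta)-p_n'(\theta)^2>0$. The paper proves both for $n\ge17$ by explicit coefficient comparisons. Your guess of a center near $n/4$ was close to the right point, but it has to be used before the cubic and quartic terms are discarded, not after.
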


\begin{proof}
Write
\[
 p_n(v)=R(n,nv)=\sum_{j=0}^4a_j(n)v^j,
\]
and put
\[
 \theta=\frac27,\qquad B=p_n(\theta),\qquad L=p_n'(\theta),
 \qquad K=a_2+\frac{11}{7}a_3+\frac{12}{49}a_4.
\]
The polynomials in Appendix~\ref{app:d8-polynomials} give
\begin{equation}
 \begin{gathered}
 a_3=-64n^3U_0(n),\qquad a_4=1638400n^4Z_0(n),\\
 K=\frac{16n^2}{49}K_0(n),\qquad
 B=\frac{B_0(n)}{2401},\qquad
 L=-\frac{4n}{343}L_0(n).
 \end{gathered}
 \label{eq:d8-quartic-identities}
\end{equation}
Let
\[
 k_0=285604168335360,\quad b_0=40604352380928,
 \quad \ell_0=233062545752064,
\]
and set $k=K_0/k_0$, $b=B_0/b_0$, and $\ell=L_0/\ell_0$.
We first prove
\begin{equation}\label{eq:d8-three-bounds}
 k(n)\ge n^6(n^2-17n+12),\qquad
 b(n)\ge\frac12n^{11},\qquad
 0<\ell(n)\le n^8(n-14).
\end{equation}
Comparing the coefficients of the polynomials in the appendix gives
\begin{align*}
 k(n)&\ge n^8-17n^7+12n^6+13n^5-25n^4
             +5n^3-13n^2+10n-2,\\
 b(n)&\ge n^{11}+13n^{10}-241n^9-1815n^8
             -1130n^6-5n^5\\
     &\qquad-938n^4-821n^3-1871n^2-244,\\
 \ell(n)&\ge n^9-19n^8+72n^7-27n^6-82n^4-65n^2-21,\\
 \ell(n)&\le n^9-\frac{37}{2}n^8+73n^7+2n^5+158n^3+90n.
\end{align*}
For $k$, the terms after $12n^6$ have sum
\[
 n^4(13n-25)+n^2(5n-13)+10n-2>0.
\]
For $b$, the last six negative terms have absolute sum at most $4n^8$,
because
\[
 1130+\frac5n+\frac{938}{n^2}+\frac{821}{n^3}
       +\frac{1871}{n^4}+\frac{244}{n^6}
 \le1138<4\cdot17^2\le4n^2.
\]
It follows that $b(n)\ge n^8(n^3+13n^2-241n-1819)$.
With $t=n-17\ge0$, we have
\[
 2\left(\frac12n^3+13n^2-241n-1819\right)
 =t^3+77t^2+1269t+595>0,
\]
which proves the asserted lower bound for $b$.
For the lower bound for $\ell$, use
\[
 \frac{27}{n}+\frac{82}{n^3}+\frac{65}{n^5}+\frac{21}{n^7}<5,
 \qquad
 n^2-19n+67=t^2+15t+33>0.
\]
The displayed lower bound for $\ell(n)$ is therefore greater than
$n^7(n^2-19n+67)>0$. For the upper bound, subtraction and division
by $n^7$ give
\[
 \frac{\ell(n)-n^8(n-14)}{n^7}
 \le-\frac92n+73+\frac2{n^2}+\frac{158}{n^4}+\frac{90}{n^6}
 <-\frac92\cdot17+73+3<0.
\]
This proves \eqref{eq:d8-three-bounds}.

The same coefficient comparisons give
\begin{align*}
 \frac{U_0(n)}{26837967175680}
 &\ge n^6-n^5-2n^4-n-1
   =(n+1)\bigl(n^4(n-2)-1\bigr)>0,\\
 \frac{Z_0(n)}{61440}
 &\ge n^4-2n^3-n^2-1
   =n^2(n^2-2n-1)-1>0.
\end{align*}
Thus $a_3<0$, $a_4>0$, and $K>0$; for the last assertion use
$n^2-17n+12=t^2+17t+12>0$.

Put
\[
 \rho=\frac{\ell_0^2}{4k_0b_0}
      =\frac{47628756003}{40674421760}<\frac65.
\]
By \eqref{eq:d8-quartic-identities},
\[
 4KB-L^2=\frac{64k_0b_0n^2}{117649}
             \bigl(k(n)b(n)-\rho\ell(n)^2\bigr).
\]
All lower bounds used in the product are positive. Hence
\eqref{eq:d8-three-bounds} implies
\begin{align*}
 k(n)b(n)-\rho\ell(n)^2
 &\ge\frac12n^{17}(n^2-17n+12)-\frac65n^{16}(n-14)^2\\
 &=\frac{n^{16}}{10}(5n^3-97n^2+396n-2352)\\
 &=\frac{n^{16}}{10}(5t^3+158t^2+1433t+912)>0.
\end{align*}
In particular, $4KB-L^2>0$.

The exact Taylor identity for the quartic is
\[
 p_n(v)=B+L(v-\theta)
 +\bigl[a_2+a_3(v+2\theta)
         +a_4(v^2+2\theta v+3\theta^2)\bigr](v-\theta)^2.
\]
For $0\le v\le1$, the expression in brackets minus $K$ is
\[
 a_3(v-1)+a_4v\left(v+\frac47\right)\ge0,
\]
because $a_3<0$ and $a_4>0$. Therefore
\[
 p_n(v)\ge
 K\left(v-\theta+\frac{L}{2K}\right)^2
 +\frac{4KB-L^2}{4K}>0.
\]
Taking $v=s/n$ proves the lemma.
\end{proof}

\begin{proof}[Proof of Proposition~\ref{prop:d8-exterior}]
For $n\ge17$, Lemma~\ref{lem:d8-quartic} and
\eqref{eq:d8-minorant-gap} prove \eqref{eq:d8-minorant}, with strict
inequality for $s>0$.
For $n=13,14,15,16$, an integer coordinate has square
$s\in\{0,1,4,9\}$, with $s=16$ also possible when $n=16$.
The case $s=0$ is equality in \eqref{eq:d8-minorant}; all other
values of $R(n,s)$ are positive by Table~\ref{tab:d8-small-radii}.
Its entries are obtained by substitution in \eqref{eq:d8-R}.
Thus \eqref{eq:d8-minorant} holds at each $s=x_j^2$ for every
lattice point with $n=|x|^2\ge13$. Summing gives
\[
 V_8^0(x)\ge8f_n(0)+\frac{\sum_jx_j^2}{n}
                   \left(\frac9n-8f_n(0)\right)=\frac9n.
\]
At least one $x_j^2$ is positive, since their sum is $n>0$.
That summand has strict inequality, proving
\eqref{eq:d8-exterior-sharp}.
\end{proof}

\begin{table}[htbp]
\centering
\caption{The remaining exterior substitutions.}
\label{tab:d8-small-radii}
\small
\begin{tabular}{ccr}
\toprule
$n$&$j$&$R(n,j^2)$\\
\midrule
13 & 1 & 583849712261030131522731\\
13 & 2 & 733024632574579780479\\
13 & 3 & 441619762576565385246219\\
14 & 1 & 1317917070542853833755017\\
14 & 2 & 9174265721464800028485\\
14 & 3 & 1114938244462327757658025\\
15 & 1 & 2791678628394353680044615\\
15 & 2 & 67238931820085655573195\\
15 & 3 & 2226888958547438103814695\\
16 & 1 & 5602880867709276856820901\\
16 & 2 & 268980424296033976089105\\
16 & 3 & 3923039680741798133135685\\
16 & 4 & 36581949289283051630721\\
\bottomrule
\end{tabular}
\end{table}

\subsection{The correction near the origin}

We specify a positive function $h$ on $\mathcal S$, constant under
coordinate permutations and sign changes. As before, $1^k$ denotes
$k$ coordinates equal to $1$, and trailing zeros are omitted.
The representatives are
\[
 \begin{gathered}
 (1^k)\quad(1\le k\le8),\qquad
 (2,1^k)\quad(0\le k\le7),\\
 (2^2,1^k)\quad(0\le k\le4),\qquad
 (2^3),\qquad
 (3,1^k)\quad(0\le k\le3).
 \end{gathered}
\]
These $26$ classes exhaust $\mathcal S$. A coordinate of absolute
value at least $4$ is impossible. If a coordinate has absolute value
$3$, no other coordinate can have absolute value $2$ or $3$, and
at most three coordinates can have absolute value $1$. Otherwise,
there are at most three coordinates of absolute value $2$.
With zero, one, two, or three such coordinates, the norm and dimension
bounds give exactly the first four families above.

For $x\in\mathcal S$, put $n=|x|^2$ and define
\begin{align}
 I_x&=\sum_{\substack{y\sim x\\y\in\mathcal S}}h(y),\notag\\
 D_x&=16-\frac9n
       -\sum_{\substack{y\sim x\\y\in X_8\setminus\mathcal S}}
                  \widehat a_8(n,|y|^2).
 \label{eq:d8-local-diagonal}
\end{align}
The diagonal remains $16$ at a vertex adjacent to the origin,
as in \eqref{eq:punctured-weight}.

The possible squared norms $(n,m)$ on an edge from $\mathcal S$
to its complement are listed in Table~\ref{tab:d8-boundary}.
For each pair the table gives integers $A_{nm}$ and
\begin{equation}\label{eq:d8-boundary-gap}
 G_{nm}=A_{nm}(P+Q)(n+m,m-n)
              -10^7(P-Q)(n+m,m-n)>0.
\end{equation}
Because $P+Q>0$, this proves
\begin{equation}\label{eq:d8-boundary-bound}
 \widehat a_8(n,m)<\frac{A_{nm}}{10^7}.
\end{equation}
Set
\[
 C_x=\sum_{\substack{y\sim x\\y\in X_8\setminus\mathcal S}}
                     A_{n,|y|^2}.
\]

\begin{table}[htbp]
\centering
\caption{Bounds for the crossing edges, with $G_{nm}$ as in
\eqref{eq:d8-boundary-gap}.}
\label{tab:d8-boundary}
\small
\begin{tabular}{ccrr}
\toprule
$n$&$m$&$A_{nm}$&$G_{nm}$\\
\midrule
8 & 13 & 5118020 & 21994980\\
9 & 14 & 5383828 & 113119500\\
9 & 16 & 4661126 & 196991750\\
10 & 13 & 6754838 & 34551138\\
10 & 15 & 5625123 & 327802375\\
10 & 17 & 4889148 & 312827076\\
11 & 14 & 6963180 & 20419980\\
11 & 16 & 5845030 & 424342050\\
11 & 18 & 5099458 & 555219786\\
12 & 13 & 8833696 & 29639328\\
12 & 15 & 7147697 & 339691911\\
12 & 17 & 6046017 & 288001201\\
12 & 19 & 5294573 & 693803591\\
\bottomrule
\end{tabular}
\end{table}

Here is the neighbor count used in both tables. If $t_r$ coordinates
have absolute value $r$, then the zero coordinates give $2t_0$
neighbors of squared norm $n+1$. For each $r\ge1$, there are $t_r$
neighbors of squared norm $n+1-2r$ and $t_r$ of squared norm
$n+1+2r$. Each is obtained by changing one coordinate of absolute
value $r$ to $r-1$ or $r+1$, respectively. This specifies its
symmetry class as well as its squared norm. The origin is omitted
from $I_x$. Only the increasing changes can leave $\mathcal S$.
In particular,
\begin{equation}\label{eq:d8-crossing-count}
 C_x=2t_0\boldsymbol{1}_{\{n+1>12\}}A_{n,n+1}
       +\sum_{r=1}^3t_r\boldsymbol{1}_{\{n+1+2r>12\}}
                            A_{n,n+1+2r},
\end{equation}
where terms with zero multiplicity or false conditions are omitted.
This rule gives exactly the thirteen pairs in
Table~\ref{tab:d8-boundary}.

Table~\ref{tab:d8-local} gives $h(x)$, the resulting values of $I_x$
and $C_x$, and integers $d_x,r_x$ satisfying
\begin{equation}\label{eq:d8-integer-local}
 d_xh(x)-10^5I_x=r_x>0,
 \qquad
 100n d_x+nC_x\le160000000n-90000000.
\end{equation}
The second inequality is equivalent to
\[
 \frac{d_x}{10^5}\le16-\frac9n-\frac{C_x}{10^7}\le D_x,
\]
where the last inequality uses \eqref{eq:d8-boundary-bound}.
All the quantities in \eqref{eq:d8-integer-local} are integers
given in the table. For example, at $x=(1)$ there are fourteen
neighbors of type $(1^2)$ and one of type $(2)$, giving
\[
 I_x=14\cdot976301+465840=14134054,
 \qquad
 7\cdot2019151-14134054=3.
\]
At $x=(2,1^4)$ the only crossing edge has $(n,m)=(8,13)$,
so $C_x=5118020$, and
\[
 1436319\cdot132937-10^5\cdot1909398=138903>0.
\]
The neighbor rule and \eqref{eq:d8-integer-local} verify the other
rows in the same way. Consequently,
\begin{equation}\label{eq:d8-local-positive}
 D_xh(x)-I_x\ge\frac{r_x}{10^5}>0,
 \qquad x\in\mathcal S.
\end{equation}
Signed coordinate permutations preserve adjacency, squared norms,
and the values of $h$. They therefore preserve $I_x$, $C_x$, and
$D_x$, so the inequalities for the representatives prove
\eqref{eq:d8-local-positive} throughout $\mathcal S$.

\begin{table}[htbp]
\centering
\caption{The local correction. Neighbor sums include multiplicities.}
\label{tab:d8-local}
\small
\setlength{\tabcolsep}{4pt}
\begin{tabular}{lrrrrr}
\toprule
$x$&$h(x)$&$I_x$&$C_x$&$d_x$&$r_x$\\
\midrule
$(1)$ & 2019151 & 14134054 & 0 & 700000 & 300000\\
$(1^{2})$ & 976301 & 11227456 & 0 & 1150000 & 550000\\
$(1^{3})$ & 548569 & 7131391 & 0 & 1300000 & 600000\\
$(1^{4})$ & 355162 & 4883472 & 0 & 1375000 & 550000\\
$(1^{5})$ & 253086 & 3593809 & 0 & 1420000 & 1220000\\
$(1^{6})$ & 192219 & 2787172 & 0 & 1450000 & 350000\\
$(1^{7})$ & 152422 & 2242769 & 0 & 1471428 & 1098616\\
$(1^{8})$ & 124707 & 1855008 & 0 & 1487500 & 862500\\
$(2)$ & 465840 & 6405288 & 0 & 1375000 & 1200000\\
$(2,1)$ & 303163 & 4304912 & 0 & 1420000 & 260000\\
$(2,1^{2})$ & 216956 & 3145856 & 0 & 1450000 & 600000\\
$(2,1^{3})$ & 166127 & 2444425 & 0 & 1471428 & 1419356\\
$(2,1^{4})$ & 132937 & 1909398 & 5118020 & 1436319 & 138903\\
$(2,1^{5})$ & 109828 & 1588283 & 5383828 & 1446161 & 670308\\
$(2,1^{6})$ & 92546 & 970298 & 46154151 & 1048458 & 794068\\
$(2,1^{7})$ & 79454 & 772529 & 54587290 & 972308 & 859832\\
$(2^{2})$ & 145309 & 2012726 & 10236040 & 1385139 & 562951\\
$(2^{2},1)$ & 117200 & 1631796 & 10767656 & 1392323 & 655600\\
$(2^{2},1^{2})$ & 97618 & 1232326 & 24759922 & 1262400 & 363200\\
$(2^{2},1^{3})$ & 83209 & 992162 & 32579600 & 1192385 & 963465\\
$(2^{2},1^{4})$ & 72239 & 552492 & 76017606 & 764823 & 848697\\
$(2^{3})$ & 76395 & 351600 & 106475011 & 460249 & 722355\\
$(3)$ & 141855 & 2061700 & 4661126 & 1453388 & 354740\\
$(3,1)$ & 113990 & 1588510 & 11643986 & 1393560 & 904400\\
$(3,1^{2})$ & 95291 & 1265386 & 19025818 & 1327923 & 510593\\
$(3,1^{3})$ & 82045 & 452000 & 97407232 & 550927 & 805715\\
\bottomrule
\end{tabular}
\end{table}

\begin{proposition}[Sharp lower bound in dimension eight]\label{prop:d8-lower}
For every $u\in\Dzero(\Z^8)$,
\begin{equation}\label{eq:d8-sharp-lower}
 \E_8(u)\ge9\HH_8(u),
\end{equation}
and the inequality is strict for every nonzero finitely supported $u$.
\end{proposition}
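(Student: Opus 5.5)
The plan is to assemble the pieces already established for $d=8$ through the finite-gluing Lemma~\ref{lem:gluing} and the Dirichlet decomposition of Proposition~\ref{prop:punctured}, with $\kappa=9$ and $S=\mathcal S=S_{12}$. Start from the field $a^0$ of \eqref{eq:d8-base-field}. By Lemma~\ref{lem:d8-factor-positive} it is positive and reciprocal on every edge with nonzero endpoints. It equals $1$ inside $\mathcal S$ and $\widehat a_8$ elsewhere; the crossing edges have an endpoint of squared norm at least $13$, so the lemma applies to them.

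The first step is the exterior hypothesis $V^0(x)\ge 9/|x|^2$ for $x\in X_8\setminus\mathcal S$, that is, $|x|^2\ge13$. Such a vertex has no neighbor at the origin. All its incident edges have an endpoint outside $\mathcal S$, so they carry $\widehat a_8$. Hence $V^0_8(x)$ is exactly the paired weight in \eqref{eq:d8-pairing}, and Proposition~\ref{prop:d8-exterior} gives the strict bound $V_8^0(x)>9/|x|^2$.

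The second step checks $Mh\ge0$ for the $h$ of Table~\ref{tab:d8-local}. The $D_x$ of \eqref{eq:core-diagonal} coincides with \eqref{eq:d8-local-diagonal}: the diagonal stays $16$ and the origin edge is omitted from the subtraction. Moreover $(Mh)(x)=D_xh(x)-I_x$. Thus \eqref{eq:d8-local-positive} gives $(Mh)(x)\ge r_x/10^5>0$ on all of $\mathcal S$, using the symmetry reduction to the $26$ representatives. This bookkeeping is the main obstacle. One must confirm that the neighbor rule \eqref{eq:d8-crossing-count} lists every crossing edge, and that the integer inequalities \eqref{eq:d8-integer-local} really imply $d_x/10^5\le D_x$ through \eqref{eq:d8-boundary-bound}. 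Both facts are already recorded before the statement, so they need only be cited carefully.

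Lemma~\ref{lem:gluing} then yields a positive reciprocal field $a$ with punctured weight $V(x)\ge9/|x|^2$ for every $x\ne0$, and in fact strictly. On $\mathcal S$ we have $V(x)-9/|x|^2=(Mh)(x)/h(x)>0$, and off $\mathcal S$ the weight is unchanged and strict. Inserting this into \eqref{eq:punctured} gives
\[
\E_8(u)-9\HH_8(u)\ \ge\ \sum_{x\ne0}\Bigl(V(x)-\frac9{|x|^2}\Bigr)|u(x)|^2,
\]
since the edge-square sum is nonnegative. Because $V(x)-9/|x|^2>0$ at every nonzero vertex, the right side is strictly positive whenever $u\in\Dzero(\Z^8)$ is nonzero, as such a $u$ has some $u(x)\ne0$ with $x\ne0$. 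This proves \eqref{eq:d8-sharp-lower} together with its strictness.
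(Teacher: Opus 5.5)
Your proposal is correct and follows the paper's proof essentially step for step. You apply Lemma~\ref{lem:gluing} with $\mathcal S=S_{12}$, $\kappa=9$, the base field \eqref{eq:d8-base-field} and the $h$ of Table~\ref{tab:d8-local}, using Proposition~\ref{prop:d8-exterior} outside $\mathcal S$ and \eqref{eq:d8-local-positive} inside it, and you then conclude, strictness included, from the punctured decomposition \eqref{eq:punctured}.
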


\begin{proof}
Use Lemma~\ref{lem:gluing} with $S=\mathcal S$, $\kappa=9$, the
base field \eqref{eq:d8-base-field}, and the function $h$ in
Table~\ref{tab:d8-local}. Explicitly, put
\[
 a_{xy}=
 \begin{cases}
  h(y)/h(x),&x,y\in\mathcal S,\\
  \widehat a_8(|x|^2,|y|^2),&\text{otherwise},
 \end{cases}
 \qquad x,y\in X_8,\quad x\sim y.
\]
The field is positive and reciprocal. Every crossing and exterior
edge is unchanged, so Proposition~\ref{prop:d8-exterior} gives
$V(x)>9/|x|^2$ for $x\in X_8\setminus\mathcal S$. For
$x\in\mathcal S$, \eqref{eq:d8-local-positive} gives
\[
 V(x)-\frac9{|x|^2}
 =\frac{D_xh(x)-I_x}{h(x)}
 \ge\frac{r_x}{10^5h(x)}>0.
\]
Thus
\begin{equation}\label{eq:d8-global-weight}
 V(x)>\frac9{|x|^2},\qquad x\ne0.
\end{equation}
Applying \eqref{eq:punctured} and dropping the nonnegative
edge-square sum proves \eqref{eq:d8-sharp-lower}. If $u\ne0$ and
$u(0)=0$, at least one nonzero vertex has $|u(x)|^2>0$.
Therefore \eqref{eq:d8-global-weight} also proves strictness.
\end{proof}

The dimension-eight field and local correction were found with
computational assistance. Their verification uses only the polynomial
identities and integer inequalities displayed above and in
Appendix~\ref{app:d8-polynomials}.

\section{Proof of the continuum upper bound}\label{sec:sharpness}

\begin{proof}[Proof of Proposition~\ref{prop:upper}]
Fix $d\ge3$, put $\alpha=(d-2)/2$, and choose a nonzero real-valued
$\phi\in C_c^\infty((0,1))$. For $L>1$, define
\begin{equation}\label{eq:phi-L}
 \varphi_L(x)=|x|^{-\alpha}
              \phi\left(\frac{\log|x|}{L}\right),\qquad x\ne0,
\end{equation}
and set $\varphi_L(0)=0$. The function is supported in an annulus
separated from the origin and vanishes near both boundaries of that
annulus. Its extension is therefore in $C_c^\infty(\R^d)$.

For a radial function $\varphi(r)=r^{-\alpha}g(\log r)$, polar
coordinates and $2\alpha=d-2$ give
\begin{align}
 \int_{\R^d}\frac{|\varphi(x)|^2}{|x|^2}\dd x
 &=\omega_{d-1}\int_\R|g(t)|^2\dd t,\label{eq:radial-mass}\\
 \int_{\R^d}|\nabla\varphi(x)|^2\dd x
 &=\omega_{d-1}\int_\R|g'(t)-\alpha g(t)|^2\dd t,
 \label{eq:radial-energy}
\end{align}
where $t=\log r$ and $\omega_{d-1}=|\mathbb S^{d-1}|$.
For \eqref{eq:phi-L}, $g(t)=\phi(t/L)$. The mixed term vanishes because
\[
 \operatorname{Re}\int_\R g'(t)\overline{g(t)}\dd t
 =\frac12\int_\R (|g(t)|^2)'\dd t=0.
\]
Changing variables from $t$ to $t/L$ yields
\begin{align}
 \int_{\R^d}\frac{|\varphi_L|^2}{|x|^2}\dd x
 &=\omega_{d-1}L\int_0^1|\phi|^2,\label{eq:continuous-mass}\\
 \int_{\R^d}|\nabla\varphi_L|^2\dd x
 &=\omega_{d-1}L\left(C_d\int_0^1|\phi|^2
               +\frac1{L^2}\int_0^1|\phi'|^2\right).
 \label{eq:continuous-energy}
\end{align}
Consequently,
\begin{equation}\label{eq:continuous-quotient}
 \frac{\int_{\R^d}|\nabla\varphi_L|^2\dd x}
      {\int_{\R^d}|\varphi_L|^2/|x|^2\dd x}
 =C_d+\frac1{L^2}
       \frac{\int_0^1|\phi'|^2}{\int_0^1|\phi|^2}.
\end{equation}

We now pass to the lattice. For fixed $L$ and $N\in\mathbb N$, set
\begin{equation}\label{eq:u-LN}
 u_{L,N}(x)=\varphi_L(x/N),\qquad x\in\Z^d.
\end{equation}
This is finitely supported and vanishes at the origin.
The difference quotients
\[
 N\bigl(\varphi_L(y+e_j/N)-\varphi_L(y)\bigr)
 =\int_0^1\partial_j\varphi_L(y+t e_j/N)\dd t
\]
converge uniformly to $\partial_j\varphi_L(y)$.
Their supports lie in one compact set depending on $L$ but not on $N$.
Writing the scaled energy as a Riemann sum therefore gives
\begin{align}
 N^{2-d}\E_d(u_{L,N})
 &=\frac1{N^d}\sum_{x\in\Z^d}\sum_{j=1}^d
 \left|N\bigl(\varphi_L((x+e_j)/N)-\varphi_L(x/N)\bigr)\right|^2
 \notag\\
 &\longrightarrow \int_{\R^d}|\nabla\varphi_L(y)|^2\dd y.
 \label{eq:energy-riemann}
\end{align}
Indeed, the difference between this sum and the Riemann sum of
$\sum_j|\partial_j\varphi_L|^2$ is bounded by the uniform error
times $N^{-d}$ times the number of grid points in a fixed compact set;
the last factor is bounded uniformly in $N$.
Likewise,
\begin{align}
 N^{2-d}\HH_d(u_{L,N})
 &=\frac1{N^d}\sum_{x\ne0}
       \frac{|\varphi_L(x/N)|^2}{|x/N|^2}\longrightarrow\int_{\R^d}\frac{|\varphi_L(y)|^2}{|y|^2}\dd y.
 \label{eq:mass-riemann}
\end{align}
The mass integrand extends continuously by zero near the origin and is
compactly supported. Its integral is strictly positive, so the sampled
function is nonzero for sufficiently large $N$, and its quotient
converges to \eqref{eq:continuous-quotient}.

For every integer $L\ge2$, choose $N(L)$ large enough that $u_{L,N(L)}$
is nonzero and its quotient differs from \eqref{eq:continuous-quotient}
by at most $L^{-2}$. With $u_L=u_{L,N(L)}$, the quotient tends to $C_d$.
This proves Proposition~\ref{prop:upper}.
\end{proof}

\section{Proof of the main results}\label{sec:completion}

For $d=5$, use the unmodified base field and write $W_5=W_5^0$.  For $d=6$,
use the field in \eqref{eq:d6-modification} and its full-lattice weight $W_6$.
For $d=7$, let $V_7$ be the punctured weight constructed in
\eqref{eq:d7-modification}.  For $d=8$, let $V_8$ be the punctured weight
constructed in Proposition~\ref{prop:d8-lower}.  Put
\[
 \kappa_d=C_d,
 \qquad d=5,6,7,8,
\]
and for $x\ne0$ define
\begin{equation}\label{eq:epsilon}
 \varepsilon_d(x)=
 \begin{cases}
 W_d(x)-C_d/|x|^2,&d=5,6,\\
 V_d(x)-C_d/|x|^2,&d=7,8.
 \end{cases}
\end{equation}
The exterior estimates, the local calculations in dimensions five through
seven, and Proposition~\ref{prop:d8-lower} give
\[
 \varepsilon_d(x)>0,
 \qquad x\ne0,
\]
in all four cases.  For the two full-lattice cases,
\[
 \gamma_5=\frac{720}{161},\qquad \gamma_6=4,
\]
and set $\gamma_7=\gamma_8=0$.

\begin{corollary}[Spatial remainders]\label{cor:improvement}
For every finitely supported $u:\Z^d\to\C$ when $d=5,6$, and for every
$u\in\Dzero(\Z^d)$ when $d=7,8$,
\begin{equation}\label{eq:general-remainder}
 \E_d(u)-C_d\HH_d(u)
 \ge \gamma_d|u(0)|^2
       +\sum_{x\ne0}\varepsilon_d(x)|u(x)|^2.
\end{equation}
The right-hand side is strictly positive for every nonzero admissible $u$.
\end{corollary}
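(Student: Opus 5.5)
The plan is to apply the edge decompositions of Section~\ref{sec:edge-proof} to the fields already constructed, drop the nonnegative edge-square sums, and then rewrite the diagonal weight as $C_d/|x|^2+\varepsilon_d(x)$.

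\emph{Full-lattice cases $d=5,6$.} Let $u$ be any finitely supported function. Applying Proposition~\ref{prop:edge} to the base field (for $d=5$) or to the modified field \eqref{eq:d6-modification} (for $d=6$) gives
\[
 \E_d(u)\ge\sum_{x}W_d(x)|u(x)|^2=W_d(0)|u(0)|^2+\sum_{x\ne0}W_d(x)|u(x)|^2 .
\]
The origin coefficient is computed in Section~\ref{sec:local}: $W_5^0(0)=720/161$ and $W_6(0)=4$ by \eqref{eq:d6-origin}. Both equal $\gamma_d$. For $x\ne0$ we write $W_d(x)=C_d/|x|^2+\varepsilon_d(x)$ by definition \eqref{eq:epsilon}. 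The term $\sum_{x\ne0}(C_d/|x|^2)|u(x)|^2$ is exactly $C_d\HH_d(u)$. Subtracting it yields \eqref{eq:general-remainder}.

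\emph{Punctured cases $d=7,8$.} Let $u\in\Dzero(\Z^d)$. Proposition~\ref{prop:punctured}, applied to the field \eqref{eq:d7-modification} or to the glued field of Proposition~\ref{prop:d8-lower}, gives $\E_d(u)\ge\sum_{x\ne0}V_d(x)|u(x)|^2$. Splitting $V_d=C_d/|x|^2+\varepsilon_d$ gives the same inequality with $\gamma_d=0$. The origin term causes no difficulty here, since $u(0)=0$.

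\emph{Positivity of $\varepsilon_d$.} The bound $\varepsilon_d(x)>0$ for every $x\ne0$ is assembled from earlier results.
\begin{itemize}
\item For $|x|^2\ge L_d$, use Proposition~\ref{prop:exterior}. For $d=6,7$, note that the modified edges do not touch vertices with $|x|^2\ge L_d$, so the weight there is still the base weight.
\item Below $L_d$, use the shell computations: the first-shell value for $d=5$, \eqref{eq:d6-shell1}--\eqref{eq:d6-shell2} for $d=6$, and \eqref{eq:d7-shell1}--\eqref{eq:d7-shell3} for $d=7$. Each shell is a single symmetry class up to signed permutations, so one representative suffices.
\item For $d=8$, use \eqref{eq:d8-global-weight}.
\end{itemize}

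\emph{Strict positivity of the right-hand side.} Let $u$ be nonzero. If $u(x)\ne0$ for some $x\ne0$, the term $\varepsilon_d(x)|u(x)|^2$ is positive and all other terms are nonnegative. Otherwise $u$ is supported at the origin. This is possible only when $d=5,6$, and then $\gamma_d|u(0)|^2>0$ because $\gamma_5,\gamma_6>0$.

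The only real bookkeeping is making sure each $\varepsilon_d$ uses the weight of the field actually in force at that vertex. For example, for $d=6$ the vertices with $|x|^2=1,2$ use the modified values rather than the base weight. The rest is routine.
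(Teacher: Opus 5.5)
Your proposal is correct and follows the paper's proof: for $d=5,6$ you apply the full-lattice identity \eqref{eq:edge}, for $d=7,8$ the punctured identity \eqref{eq:punctured}, and in each case you drop the nonnegative edge squares and write the weight as $C_d/|x|^2+\varepsilon_d(x)$, with $\gamma_d$ equal to the origin weight. Your explicit case split for strict positivity — either $u(x)\ne0$ for some $x\ne0$, or $u$ is supported at the origin, which can only happen for $d=5,6$ where $\gamma_d>0$ — spells out what the paper's short proof leaves implicit.
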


\begin{proof}
For $d=5,6$, subtract $C_d\HH_d(u)$ from the full-lattice identity
\eqref{eq:edge}.  For $d=7,8$, use the punctured identity
\eqref{eq:punctured}.  In each case, the remaining edge-square sum is
nonnegative and the pointwise coefficients above are strictly positive.
\end{proof}

\begin{proof}[Proof of Theorem~\ref{thm:main}]
For $d=5,6,7$, Corollary~\ref{cor:improvement} gives $\lambda_d\ge C_d$.
For $d=8$, Proposition~\ref{prop:d8-lower} gives $\lambda_8\ge9=C_8$.
Proposition~\ref{prop:upper} gives the reverse inequality
$\lambda_d\le C_d$ in every dimension.  Hence
\[
 \lambda_d=C_d,
 \qquad d=5,6,7,8.
\]
The same minimizing sequence proves sharpness in the enlarged full-lattice
classes for $d=5,6$.  The strict remainders exclude nonzero finitely supported
extremizers in all stated admissible classes.
\end{proof}

\begin{remark}[Alternative proof in dimension four]\label{rem:d4-check}
Set $d=4$ in the quadratic field \eqref{eq:base-field}. 
For $r=1/2$, the coefficients $\mathsf A,\mathsf B,\mathsf G$
in \eqref{eq:general-coefficients} satisfy
$\mathsf A(n),\mathsf G(n)>0$ for $n\ge2$ and
$\mathsf B(n)>0$ for $n\ge8$.
Indeed, the expansions of $\mathsf A$ and $\mathsf G$
in powers of $n-2$, and of $\mathsf B$ in powers of $n-8$,
have strictly positive coefficients.
For the remaining integer values, direct substitution gives
\[
 4\mathsf A(n)\mathsf G(n)-\mathsf B(n)^2
 \ge \frac{90737675}{4}>0,
 \qquad n=2,\ldots,7.
\]
Completing the square for these six values, and using
coefficient positivity for $n\ge8$, therefore gives
\[
 \mathsf A(n)s^2+\mathsf B(n)s+\mathsf G(n)>0
 \qquad\text{for integers }n\ge2,\quad s\ge0.
\]
The same
affine-minorant computation gives
\[
 W_4^0(x)>\frac1{|x|^2},
 \qquad |x|^2\ge2,
\]
while the first-shell calculation in Section~\ref{sec:local} gives
\[
 W_4^0(0)=\frac{32}{11},
 \qquad
 W_4^0(e_1)-1=\frac{499}{1645}>0.
\]
The full-lattice decomposition therefore yields $\E_4(u)\ge\HH_4(u)$, with
strict inequality for nonzero finitely supported $u$.  Proposition~\ref{prop:upper}
gives the reverse bound for the sharp coefficient, so $\lambda_4=1$.  This
recovers the result of \cite{Lizama2026} by a different method.
\end{remark}

\begin{remark}[Further problems]
It remains desirable to find a simpler choice of the dimension-eight
reciprocal field and local correction, and to understand structurally
why the transition occurs between dimensions eight and nine.
Another open problem is to determine the exact value of $\lambda_9$.
\end{remark}

\appendix

\section{The scalar polynomial inequalities}\label{app:polynomials}

We give the identities used in Lemma~\ref{lem:quadratic-positivity}.
For $r=(d-2)/4$, use the coefficients in
\eqref{eq:general-coefficients} and set
\[
 A_d=\frac{\mathsf A}{\theta_d},\qquad
 B_d=\frac{\mathsf B}{\theta_d},\qquad
 G_d=\frac{\mathsf G}{\theta_d},
\]
\[
 P_d(n,s)=A_d(n)s^2+B_d(n)s+G_d(n).
\]
The parameters are
\[
\begin{array}{c|rrrr}
 d&L_d&\theta_d&c_d&\eta_d\\\hline
 5&2&7/147456&1/2&8\\
 6&3&16&1/2&4\\
 7&4&3/16384&2/5&8
\end{array}
\]
Set $t=n-L_d$ and $\ell_d(n)=n/4+c_d$. Then
\begin{equation}\label{eq:shifted-square}
 P_d(n,s)
 =A_d(n)(s-\ell_d(n))^2+\eta_d Q_d(t)s+T_d(t),
\end{equation}
where
\[
 Q_d(t)=\sum_{k=0}^4q_{d,k}t^k,\qquad
 T_d(t)=\sum_{k=0}^4\tau_{d,k}t^k.
\]
The coefficients are given in the following two tables, in ascending
powers of $t$.
\begin{center}
\begin{tabular}{crrrrr}
\toprule
$d$&$q_{d,0}$&$q_{d,1}$&$q_{d,2}$&$q_{d,3}$&$q_{d,4}$\\
\midrule
5&1953691&1920393&1555752&1243200&313344\\
6&49&52&35&14&2\\
7&91897&2534875&2627848&858816&88064\\
\bottomrule
\end{tabular}
\end{center}
\begin{center}
\begin{tabular}{crrrrr}
\toprule
$d$&$\tau_{d,0}$&$\tau_{d,1}$&$\tau_{d,2}$&$\tau_{d,3}$&$\tau_{d,4}$\\
\midrule
5&5738185&48059472&58320903&24299256&3329984\\
6&15&376&341&102&10\\
7&7231003&67377072&45139381&10287960&779584\\
\bottomrule
\end{tabular}
\end{center}

Substitution into \eqref{eq:general-coefficients} gives
\begin{align*}
 B_d(n)+2A_d(n)\ell_d(n)&=\eta_d Q_d(n-L_d),\\
 G_d(n)-A_d(n)\ell_d(n)^2&=T_d(n-L_d).
\end{align*}
Expanding the square in \eqref{eq:shifted-square} gives the coefficient
$A_d$ of $s^2$, the coefficient
$-2A_d\ell_d+\eta_dQ_d=B_d$ of $s$, and the constant term
$A_d\ell_d^2+T_d=G_d$.
This proves \eqref{eq:shifted-square}, and all coefficients in the two
tables are strictly positive.

\section{The dimension-eight polynomials}\label{app:d8-polynomials}

The polynomials used in \eqref{eq:d8-quartic-identities} are
\begin{align*}
 U_0(n)&=26837967175680n^{6}-8736198115328n^{5}-45366892868096n^{4}+39046933756664n^{3}\\
 &\qquad+30900487686908n^{2}-2306003601470n-2055196710591,\\[3pt]
 Z_0(n)&=61440n^{4}-67528n^{3}-60260n^{2}+145522n-33351,\\[3pt]
 K_0(n)&=285604168335360n^{8}-4790553012109312n^{7}+3502945568485376n^{6}+3934646221764768n^{5}\\
 &\qquad-7100287085715120n^{4}+1600828145517952n^{3}\\
 &\qquad-3654569877129056n^{2}+2970970276805622n-516945403230669,\\[3pt]
 B_0(n)&=40604352380928n^{11}+542739440861184n^{10}\\
 &\qquad-9775372273390080n^{9}-73658469568446208n^{8}\\
 &\qquad+110311315618614784n^{7}-45861436690164992n^{6}\\
 &\qquad-188316503762368n^{5}-38058456104795552n^{4}
 -33327827430984352n^{3}\\
  &\qquad-75953851901356240n^{2}+71743315540970206n-9896377011029439,\\[3pt]
 L_0(n)&=233062545752064n^{9}-4339817347145728n^{8}\\
 &\qquad+16983147052856704n^{7}-6077516035207744n^{6}\\
&\qquad+342599359470176n^{5}-19058245905170576n^{4}+36766047575745736n^{3}\\
&\qquad-14952238030883916n^{2}+20854347764951890n-4772466016346427.
\end{align*}
To verify \eqref{eq:d8-quartic-identities}, expand \eqref{eq:d8-R}
using \eqref{eq:d8-paired-polynomials}, and substitute $s=nv$.
The coefficients of $v^3$ and $v^4$ give $a_3$ and $a_4$.
The remaining identities follow by evaluating $p_n(2/7)$,
$p_n'(2/7)$, and $a_2+(11/7)a_3+(12/49)a_4$.
Thus every polynomial used in the proof is specified by the displayed
formulas.


\begin{thebibliography}{99}

\bibitem{Alpay2026}
N.~Alpay,
\emph{The sharp discrete Hardy inequality on $\mathbb{Z}^3$},
arXiv preprint
\href{https://arxiv.org/abs/2608.25262}{arXiv:2608.25262},
2026.


\bibitem{Gupta2023}
S.~Gupta,
\emph{Hardy and Rellich inequality on lattices},
Calc. Var. Partial Differential Equations \textbf{62} (2023),
Paper No.~81, 18~pp.
\href{https://doi.org/10.1007/s00526-022-02407-0}
{doi:10.1007/s00526-022-02407-0}.

\bibitem{Gupta2026}
S.~Gupta,
\emph{Sharp asymptotics for higher-order Hardy constants on lattices},
arXiv preprint
\href{https://arxiv.org/abs/2607.15181}{arXiv:2607.15181},
2026.

\bibitem{HuangYe2026}
X.~Huang and D.~Ye,
\emph{Asymptotically sharp Hardy--Rellich inequalities on lattices},
arXiv preprint
\href{https://arxiv.org/abs/2607.23057}{arXiv:2607.23057},
2026.


\bibitem{LizamaAsymptotics2026}
C.~Lizama,
\emph{Three-term asymptotics for discrete Hardy--Rellich constants
in high dimension},
arXiv preprint
\href{https://arxiv.org/abs/2608.30888}{arXiv:2608.30888},
2026.

\bibitem{Lizama2026}
C.~Lizama,
\emph{Sharp discrete Hardy constants in dimensions three and four
and strict upper bounds from dimension nine},
arXiv preprint
\href{https://arxiv.org/abs/2608.30831}{arXiv:2608.30831},
2026.


\end{thebibliography}
\end{document}